\documentclass[10pt]{olplainarticle}
\usepackage{setspace}
\usepackage[inkscapelatex=false]{svg}
\title{Adaptive Hybrid Subspace Levenberg–Marquardt Algorithm with Adequacy Monitor for Large-Scale Least Squares Problems}

\author{M. Duc Hoang \& Timothy J. Lewis}
\affil{Department of Mathematics, University of California, Davis, Davis, CA, USA}

\keywords{Levenberg-Marquardt, large-scale least squares problem, hybrid subspace, Adequacy Monitor}

\begin{abstract}
The Levenberg–Marquardt (LM) algorithm is the most widely used method for solving nonlinear least-squares problems, as it combines the robustness of steepest descent with the fast local convergence of the Gauss-Newton method. However, its computational cost can become prohibitive for large-scale problems because each iteration requires solving a large damped linear system, and conventional step-acceptance strategies may require repeated solves as the damping parameter is adjusted. Despite this computational challenge, many large-scale least-squares problems exhibit effective low-dimensional structure, with only a small number of parameter-space directions strongly informed by the data. We propose an adaptive hybrid subspace Levenberg–Marquardt (HSLM) algorithm that constructs a low-dimensional subspace from complementary sources of gradient, memory, Krylov-subspace, and randomized curvature information and computes a spectrally damped LM step within this subspace. A distinguishing feature of the method is a deterministic adequacy monitor that quantifies how much descent information is captured by the reduced space and adaptively enriches the subspace when necessary.  Step acceptance is decoupled from damping adjustment: Armijo backtracking determines the accepted step length, while the ratio of actual to predicted reduction is used solely to update the damping parameter, thereby avoiding repeated damped-system solves during step acceptance. For the HSLM algorithm, we establish global convergence to stationarity and prove local linear and superlinear convergence. Numerical experiments on neural-network training problems show that HSLM achieves convergence behavior comparable to classical and Krylov-subspace LM (KSLM) while substantially reducing per-iteration computational cost, with increasing advantages observed as the parameter dimension grows.
\end{abstract}
\begin{document}
\flushbottom
\maketitle
\thispagestyle{empty}
\section{Introduction}
Large-scale nonlinear least-squares problems arise in many areas, including physics, engineering, inverse modeling, and machine learning. Second-order methods, such as Levenberg--Marquardt (LM) algorithm, are attractive because of their robustness and fast local convergence, especially for ill-conditioned nonlinear least-squares problems \cite{Levenberg1944,Marquardt1963,MadsenNielsenTingleff2004}. However, their direct application becomes computationally expensive in high-dimensional problems, since each iteration requires solving large linear systems involving the Jacobian or an approximate Hessian \cite{NocedalWright2006,DennisSchnabel1996}. At the same time, many inverse problems possess intrinsic low-dimensional structure, in the sense that the data constrain only a relatively small subspace of the full parameter space, while directions outside this subspace are weakly identifiable or dominated by regularization. This observation is consistent with active-subspace ideas, which identify important directions along which the model or objective varies most strongly \cite{constantine2015active,constantine2016accelerating}. This structure motivates reduced-space and low-rank variants of LM, which seek to approximate the full problem within a lower-dimensional subspace that captures the dominant data-informed and curvature directions.

Subspace and low-rank approximations have long been studied as a way to reduce the cost of LM-type methods for large-scale inverse and nonlinear least-squares problems. These approaches include modified LM methods for large-scale inverse problems \cite{VogelWade1993}, general subspace frameworks for nonlinear equations and nonlinear least squares \cite{yuan2009subspace}, randomized truncated SVD approximations of sensitivity matrices \cite{Bjarkason2018}, and Krylov-based projection methods for highly parameterized inverse models \cite{Lin2016}. In parallel, random embedding and randomized subspace optimization methods have been developed to reduce the dimension of the variable space itself \cite{Cartis2022,Shao2021}. Related randomized techniques include stochastic Hessian approximation, as in SPAN \cite{huang2020span}, and randomized matrix-decomposition methods for identifying dominant low-rank subspaces \cite{halko2011finding}. Together, these works show that subspace approximations provide a natural route toward scalable LM-type methods by reducing the cost of each iteration while retaining important curvature and data-informed directions.

Despite these advances, most existing subspace methods lack an explicit deterministic mechanism to ensure that the chosen subspace remains adequate as the optimization progresses and local curvature directions change. Randomized subspace methods, for example, rely on probabilistic embedding properties to guarantee sufficient descent or curvature capture with high probability, but they do not necessarily exploit problem-specific curvature structure or incorporate memory from previous iterations. In addition, low-rank and Krylov-based approaches can efficiently approximate individual LM steps, but the quality of the reduced space may depend strongly on how well the current subspace captures the dominant local geometry. As a result, achieving both robust global convergence and fast local convergence within a scalable, matrix-free framework remains challenging.

In this work, we propose a hybrid subspace Levenberg-Marquardt (HSLM) algorithm that integrates ideas from spectral truncation, Krylov subspace methods, and randomized subspace methods. At each iteration, the method constructs a compact subspace that explicitly incorporates descent information via the gradient, previous accepted steps, and curvature information obtained from Hessian or Jacobian-vector probes. A projected-gradient adequacy measure is employed to monitor descent-relevant information in the subspace and to adaptively expand or refine it when necessary. To avoid repeatedly solving expensive damped systems, we also use Armijo backtracking line search for step acceptance and rely exclusively on the predicted-versus-actual reduction ratio to update the damping parameter. Under standard smoothness, regularity, and vanishing-approximation assumptions, we establish global convergence of the HSLM algorithm, with global convergence to stationarity and local linear/superlinear convergence rates. Finally, we implement the HSLM algorithm to train feedforward neural networks of varying problem sizes and compare its performance with the classical LM and Krylov subspace LM algorithms.

\section{Background and notation}
Nonlinear least-squares problems arise widely in parameter estimation, scientific computing, engineering, and machine learning, and are commonly formulated as
\[
 \min_{x\in \mathbf{R}^n} \mathbf{F}(x) := \min_{x\in \mathbf{R}^n} \dfrac{1}{2} \|\mathbf{r}(x)\|^2,
\]
where $\mathbf{r}(x):\mathbf{R}^n\to\mathbf{R}^m$ is a vector of the residual. The Gauss–Newton method \cite{DennisSchnabel1996} solves nonlinear least-squares problems by linearizing the residual function at each step. At iteration $k$, the residual is approximated by $\mathbf{r}(x_k + s_k) \approx \mathbf{r}(x_k) + \mathbf{J}(x_k) s_k$, leading to a linear least-squares subproblem where the update direction $s_k$ satisfies the equations
\[
\left(\mathbf{J}(x_k)^\top \mathbf{J}(x_k)\right)s_k = -\mathbf{J}(x_k)^\top\mathbf{r}(x_k)
\]
where $\mathbf{J}(x)\in\mathbf{R}^{m\times n}$ is the Jacobian of the residuals and $\mathbf{H}(x_k)\approx\mathbf{J}(x_k)^\top \mathbf{J}(x_k)$ is the corresponding Gauss–Newton approximation to the Hessian matrix. Gauss–Newton exhibits second-order local convergence when residuals are small and the Jacobian has full rank. However, the method may fail when the Jacobian is ill-conditioned or the initial guess is far from the optimal solution. To improve robustness, Levenberg \cite{Levenberg1944} and Marquardt \cite{Marquardt1963} introduced a damping strategy that modifies the system as
\begin{equation}\label{eq:LM}
\left(\mathbf{J}(x_k)^\top\mathbf{J}(x_k) + \mu_k \mathbf{I}_n\right)s_k = -\mathbf{J}(x_k)^\top\mathbf{r}(x_k)
\end{equation}
where the damping term $\mu_k> 0$. In the classical Levenberg-Marquardt algorithm, the damping update strategy plays a key role in the method's stability and efficiency. When the residual is effectively reduced, the damping parameter is decreased, bringing the algorithm closer to the Gauss-Newton direction. Conversely, when an iteration produces insufficient reduction in the residual, the damping parameter is increased, yielding a step closer to the steepest-descent direction.\\
For notational convenience, we denote 
\[\mathbf{g}(x)=\nabla \mathbf{F}(x)=\mathbf{J}(x)^\top \mathbf{r}(x)\] 
and denote the residual, the Jacobian matrix, the gradient and Gauss-Newton Hessian approximation at $x_k$ by 
\[r_k=\mathbf{r}(x_k),\hspace{1em} J_k=\mathbf{J}(x_k),\hspace{1em} g_k=\mathbf{g}(x_k),\hspace{1em} H_k=\mathbf{H}(x_k).\]
\section{Hybrid Subspace Levenberg Marquart Algorithm}
The HSLM algorithm is based on the observation that, although the optimization takes place in a high-dimensional parameter space, only a relatively small number of directions are typically needed to capture the dominant descent and curvature information. Rather than relying on a fixed reduced space, HSLM constructs an adaptive hybrid subspace at each iteration from multiple sources of information, including the gradient, recent optimization history, Krylov/Lanczos directions, and randomized curvature probes. A distinguishing feature of the method is a deterministic adequacy monitor that evaluates whether the current subspace captures sufficient descent information and adaptively enriches the subspace whenever necessary. Once an adequate subspace has been constructed, the algorithm computes a spectrally damped Levenberg–Marquardt step within the reduced space. Step acceptance is determined using Armijo backtracking, while the ratio of actual to predicted reduction is used solely to update the damping parameter, thereby avoiding repeated damped-system solves. The remainder of this section describes the subspace construction, the adequacy monitor, the reduced LM system, step acceptance and damping update.

\subsection{Subspace construction}
At each iteration, HSLM constructs an orthonormal basis $V_k\in\mathbb{R}^{n\times p}$ from candidate directions that provide complementary information about the local optimization problem. Krylov directions provide an efficient way to incorporate local curvature information \cite{Vinyals2012,yuan2009subspace}, while randomized Hessian-vector products provide an inexpensive way to sample local curvature information without explicitly forming the Hessian \cite{huang2020span}. Other dimension-reduction techniques include randomized subspace and embedding methods that construct lower-dimensional representations of high-dimensional optimization problems while approximately preserving relevant geometric information \cite{Cartis2022,Shao2021}, and truncated SVD approaches that approximate dominant directions of the sensitivity matrix \cite{Bjarkason2018}. Motivated by these complementary ideas, HSLM draws candidate and enrichment directions from four complementary sources:
\begin{itemize}
\item the normalized gradient, which provides first-order descent information; 
\item recent optimization steps, which encode local optimization history; 
\item Krylov/Lanczos vectors, which approximate dominant curvature directions; and 
\item randomized Gauss-Newton-vector probes $(J_k^\top J_k) w$ where $w\sim \mathcal{N}(\mathbf{0},\mathbf{I}_n)$, which broaden spectral coverage and improve robustness when the local curvature is noisy or heterogeneous. 
\end{itemize}
In this way, the method adapts the reduced space to the local geometry at each iteration while keeping the dominant linear algebra restricted to a low-dimensional subproblem.

\subsection{Deterministic Adequacy Monitor}
Although the candidate pool combines multiple complementary sources of information, there is no guarantee that the resulting reduced space adequately represents the current descent geometry. Therefore, before computing the reduced Levenberg–Marquardt step, we evaluate the quality of the subspace using a deterministic adequacy criterion. Specifically, we measure the fraction of the gradient energy captured by the current subspace through the projected-gradient energy ratio
\begin{equation}\label{eq:eta_sub}
\eta^{\mathrm{sub}}_k := \frac{\|V_k^\top g_k\|^2}{\|g_k\|^2}\in[0,1].
\end{equation}
The quantity $\eta^{\mathrm{sub}}_k$ takes values in $[0,1]$, with larger values indicating that the subspace captures a greater proportion of the full gradient and therefore contains sufficient first-order descent information. If $\eta^{\mathrm{sub}}_k$ drops below a prescribed threshold $\eta_{\min}\in(0,1]$, additional candidate directions are generated from the pool described in Section 3.1, the basis is updated, and the adequacy test is repeated until the threshold is satisfied.

\subsection{Reduced LM system}
Once the adequacy criterion is satisfied, the reduced basis is fixed for the current iteration. The Levenberg–Marquardt step is then computed entirely within this subspace, reducing the n-dimensional damped system to a much smaller p-dimensional problem. Restricting the Jacobian to the reduced subspace gives the reduced Jacobian $J_{k,p} := J_k V_k \in \R^{m\times p}$.\\
Let 
\begin{equation}\label{eq:svd}
J_{k,p} = U_k\Sigma_{k,p} Z_k^\top,
\qquad
\Sigma_{k,p}=\mathrm{diag}(\sigma_{k,1},\dots,\sigma_{k,p}).
\end{equation}
denote its singular value decomposition. This decomposition diagonalizes the reduced Gauss–Newton matrix restricted to the reduced subspace and provides a convenient basis for constructing the reduced LM system. In particular,
\begin{equation}\label{eq:6}
V_k^\top H_k V_k = (J_kV_k)^\top(J_kV_k)=J_{k,p}^\top J_{k,p} = Z_k\Sigma_{k,p}^2 Z_k^\top,
\end{equation}
and
\begin{equation}\label{eq:gred}
V_k^\top g_k = V_k^\top (J_k^\top r_k) = (J_kV_k)^\top r_k = J_{k,p}^\top r_k = Z_k\Sigma_{k,p}U_k^\top r_k.
\end{equation}
We restrict the search direction $s_k$ to the reduced subspace by writing $s_k = V_k Z_k y_k$. Using \eqref{eq:6} and \eqref{eq:gred}, the classical reduced Levenberg–Marquardt system becomes
\[
\left( \Sigma_{k,p}^2 + \mu_k \mathbf{I}_p\right) y_k = - \Sigma_{k,p} U_k^\top r_k
\]
This reduced formulation inherits the isotropic damping of the standard LM algorithm. However, because the singular values of the reduced Jacobian can vary substantially, treating all directions equally may lead to unnecessary damping in some directions and insufficient damping in others. To better accommodate ill-conditioning and varying local curvature, we replace the isotropic damping matrix $\mathbf{I}_p$ with a diagonal, curvature-dependent damping matrix $D_{k,p}$, giving
\begin{equation}\label{eq:reducedLM}
 B_k y_k = - \Sigma_{k,p} U_k^\top r_k  \qquad \text{where } B_k := \Sigma_{k,p}^2 + \mu_k D_{k,p},
\end{equation}
and 
\begin{equation}\label{eq:D}
D_{k,p} := \mathrm{diag}(d_{k,1},\dots,d_{k,p}),\qquad d_{k,i} := \max(\sigma_{k,i}^2,\delta).
\end{equation}
The threshold $\delta>0$ guarantees that every damping coefficient remains strictly positive, preventing excessively large updates associated with very small singular values. Consequently, the amount of regularization adapts to the local curvature represented by the singular values: directions associated with larger singular values receive stronger damping, while flatter directions remain comparatively less damped. This curvature-adaptive damping improves numerical stability while preserving efficient progress along weakly constrained directions. The resulting reduced system \eqref{eq:reducedLM} is therefore solved in the low-dimensional subspace, after which the corresponding search direction is mapped back to the original parameter space.
\begin{remark}
When the candidate subspace becomes large, its dimension may be controlled by compressing the basis using an SVD-based energy criterion. In practice, the basis can be maintained efficiently using incremental QR factorization together with an SVD of a small compressed matrix, following standard incremental SVD techniques \cite{Brand2002incremental}. 
\end{remark}
\subsection{Step acceptance and damping updates}
After computing the reduced LM search direction $s_k$, we determine an appropriate step length using Armijo backtracking \cite{Armijo1966}. Rather than accepting the full step unconditionally, Armijo backtracking performs a line search to ensure a sufficient decrease in the objective function. Specifically, Armijo backtracking starts with the full step $(t_k=1)$ and repeatedly reduces the step length by a factor $\beta \in (0,1)$ until the Armijo condition
\begin{equation}\label{eq:armijo}
F(x_k+t_k s_k)\le F(x_k)+\alpha\, t_k\, g_k^\top s_k,
\qquad \alpha\in (0,1),
\end{equation}
is satisfied. The accepted step length is therefore $t_k=\beta^{j_k}$, where $j_k$ is the smallest nonnegative integer for which the inequality holds. The iterate is updated according to 
\[x_{k+1}=x_k+t_k s_k.\]
Once a step has been accepted, we define the damped quadratic model associated with the reduced Levenberg–Marquardt system,
\begin{equation}\label{eq:model}
m_k(s):=\dfrac{1}{2}\norm{r_k+J_k s}^2 + \frac{\mu_k}{2}\norm{D_{k,p}^{1/2}Z_k^\top V_k^\top s}^2.
\end{equation}
This model combines the linearized least-squares objective with the curvature-adaptive damping introduced in Section 3.3 and provides the predicted reduction used to update the damping parameter. The agreement between the predicted reduction of the model and the actual reduction of the objective is measured by the gain ratio
\begin{equation}\label{eq:rho}
\rho_k := \frac{F(x_k)-F(x_k+t_k s_k)}{m_k(0)-m_k(t_k s_k)}.
\end{equation}
Following \citep{MadsenNielsenTingleff2004,ConnGouldToint2000}, the damping parameter is updated according to the gain ratio using the standard rule
\begin{equation}\label{eq:lambdaup}
\mu_{k+1}=
\begin{cases}
\mu_k/\mu_{down}, & \rho_k\ge \rho_{\mathrm{high}}\\
\mu_k, & \rho_{\mathrm{low}}\le \rho_k < \rho_{\mathrm{high}}\\
\mu_k.\mu_{up}, & \rho_k<\rho_{\mathrm{low}}
\end{cases}.
\end{equation}
Unlike the classical LM algorithm, step acceptance and damping updates are decoupled: Armijo backtracking determines the step length, while the gain ratio is used to update the damping parameter. This separation avoids repeated damped-system solves during step acceptance while preserving the role of the damping parameter as a measure of local quadratic approximation quality.

\subsection{Pseudocode}
\begin{algorithm}[H]
\caption*{\textbf{Algorithm} Hybrid subspace Levenberg Marquardt}
\label{alg:main}
\begin{algorithmic}
\STATE \textbf{Input:} Initial guess $x_0$, Jacobian matrix $J_k$, gradient vector $g_k$, damping term $\mu_0>0$, singular value threshold $\delta>0$, adequacy threshold $\eta_{\min}\in(0,1]$, Armijo constants $(\alpha,\beta)$, ratio thresholds $(\rho_{\mathrm{low}},\rho_{\mathrm{high}})$.
\FOR{$k=0,1,2,\dots$}
\STATE Construct a candidate basis, then orthonormalize it to obtain $V_k$.
\STATE Compute $\eta^{\mathrm{sub}}_k=\|V_k^\top g_k\|^2/\|g_k\|^2$.
\WHILE{$g_k\neq 0$ \textbf{and} $\eta^{\mathrm{sub}}_k<\eta_{\min}$}
  \STATE Expand the subspace, then update $V_k$.
  \STATE Recompute $\eta^{\mathrm{sub}}_k$.
\ENDWHILE
\STATE Form $J_{k,p}=J_kV_k$ and its SVD $J_{k,p}=U_k\Sigma_{k,p}Z_k^\top$.
\STATE Set $D_{k,p}=\mathrm{diag}(\max(\sigma_{k,i}^2,\delta))$ and $B_k=\Sigma_{k,p}^2+\mu_k D_{k,p}$.
\STATE Solve $B_k y_k = -\Sigma_{k,p}U_k^\top r_k$ and set $s_k=V_k Z_k y_k$.
\STATE Find smallest $j\ge 0$ such that $t_k=\beta^j$ satisfies \eqref{eq:armijo}; set $x_{k+1}=x_k+t_k s_k$.
\STATE Update $\mu_{k+1}$.
\ENDFOR
\end{algorithmic}
\end{algorithm}
\section{Convergence analysis}
This section establishes the global and local convergence properties of HSLM. The analysis follows the standard framework for line-search Levenberg–Marquardt methods while accounting for the adaptive reduced subspace. Building on the classical analysis, we identify the additional conditions needed to ensure that restricting the LM step to an adaptive subspace preserves the relevant global and local convergence properties. For global convergence, the key ingredients are the deterministic adequacy condition, which ensures that the reduced space retains sufficient gradient information, and the Armijo line search, which guarantees sufficient decrease. For local convergence, we quantify how accurately the reduced system approximates the full Newton system and show how the resulting approximation errors determine the local convergence rate.

\subsection{Global Convergence}

We first impose two standard assumptions used in the global convergence analysis of line-search methods ~\cite{Armijo1966,NocedalWright2006}. Lower boundedness prevents indefinite decrease of the objective along the initial level set, $\mathcal{L}:=\{x: F(x)\le F(x_0)\}$, while Lipschitz continuity of the gradient provides the smoothness needed to establish sufficient decrease.

\begin{assumption}[Lower boundedness on the level set]\label{ass:A1}
$F$ is bounded below on $\mathcal{L}$.
\end{assumption}

\begin{assumption}[Lipschitz gradient on the level set]\label{ass:A2}
$\nabla F$ is Lipschitz continuous on an open neighborhood of $\mathcal{L}$: there exist $L_1>0$ such that $\norm{\nabla F(x)-\nabla F(y)}\le L_1\norm{x-y}$ for all $x,y\in\mathcal{L}$.
\end{assumption}
The remaining assumptions account for the fact that the search direction is computed only in a reduced subspace. In a full-space method, a nonzero gradient directly provides a descent direction. In a subspace method, however, the projected gradient may be arbitrarily small if the chosen subspace is poorly aligned with the gradient. Therefore, we required the algorithm to construct or enrich the subspace until it captures a uniformly positive fraction of the full-gradient information. This deterministic adequacy condition is analogous to alignment and subspace-quality requirements appear in randomized and reduced subspace methods ~\cite{Cartis2022,Shao2021}.
\begin{assumption}[Deterministic adequacy enforcement]\label{ass:A3}
There exists $\eta_{\min}\in(0,1]$ such that the algorithm enforces $\eta^{\mathrm{sub}}_k\ge \eta_{\min}$ for all iterations with $g_k\neq 0$, where $\eta^{\mathrm{sub}}_k$ is defined in \eqref{eq:eta_sub}.
\end{assumption}
Finally, we assume the reduced problem to remain uniformly well conditioned. Positive definiteness of the reduced matrix guarantees that the reduced model is strictly convex and that the subspace step is uniquely defined, while uniform eigenvalue bounds prevent the reduced system from becoming nearly singular or excessively scaled. These bounds also provide the estimates needed to relate the step norm and predicted reduction to the projected gradient, and are standard conditioning requirements for subspace methods ~\cite{DennisSchnabel1996,MadsenNielsenTingleff2004}, even when the reduced Jacobian is rank deficient.
\begin{assumption}[Uniform boundedness of reduced conditioning]\label{ass:A4}
There exists $m_B,M_B$ such that $0<m_B\le \lambda_{\min}(B_k)\le \lambda_{\max}(B_k)\le M_B<\infty$ for all $k$, where $B_k$ is defined in \eqref{eq:reducedLM}.
\end{assumption}

Before establishing descent and convergence properties, we first verify that the reduced Levenberg–Marquardt system is well posed. The following lemma shows that the regularization term makes the reduced matrix positive definite whenever $\mu_k>0$, ensuring that the reduced step exists uniquely.
\begin{lemma}[Positive definiteness of the reduced system]\label{lem:spd}
$B_k=\Sigma_{k,p}^2+\mu_k D_{k,p}$ is symmetric positive definite.
\end{lemma}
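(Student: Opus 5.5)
The plan is to use the fact that $B_k$ is diagonal, so symmetry is automatic and positive definiteness reduces to positivity of the diagonal entries. By \eqref{eq:svd}, $\Sigma_{k,p}=\mathrm{diag}(\sigma_{k,1},\dots,\sigma_{k,p})$ with real singular values $\sigma_{k,i}\ge 0$, so $\Sigma_{k,p}^2=\mathrm{diag}(\sigma_{k,1}^2,\dots,\sigma_{k,p}^2)$ is diagonal. By \eqref{eq:D}, $D_{k,p}$ is also diagonal. Hence
\[
B_k=\mathrm{diag}\bigl(\sigma_{k,1}^2+\mu_k d_{k,1},\dots,\sigma_{k,p}^2+\mu_k d_{k,p}\bigr),
\]
and in particular $B_k=B_k^\top$.

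For the diagonal entries, I will use the two positivity sources built into the algorithm: $\mu_k>0$, and $d_{k,i}=\max(\sigma_{k,i}^2,\delta)\ge\delta>0$. Together with $\sigma_{k,i}^2\ge 0$ this gives
\[
\sigma_{k,i}^2+\mu_k d_{k,i}\ \ge\ \mu_k\delta\ >\ 0 \qquad \text{for each } i.
\]
Consequently, for any $y\ne 0$,
\[
y^\top B_k y=\sum_i(\sigma_{k,i}^2+\mu_k d_{k,i})y_i^2\ \ge\ \mu_k\delta\|y\|^2>0,
\]
so $B_k$ is symmetric positive definite, with $\lambda_{\min}(B_k)\ge\mu_k\delta$.

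The only step needing care is confirming that $\mu_k>0$ holds at every iteration. The algorithm initializes $\mu_0>0$, and the update rule \eqref{eq:lambdaup} only multiplies or divides by positive constants $\mu_{up},\mu_{down}>0$. A one-line induction on $k$ therefore gives $\mu_k>0$ for all $k$, and the lemma follows. The argument does not need any full-rank assumption on $J_{k,p}$: the threshold $\delta$ alone keeps every entry of $D_{k,p}$ positive, which handles a rank-deficient reduced Jacobian.
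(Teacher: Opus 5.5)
Your proof is correct and follows the same argument as the paper: $\Sigma_{k,p}^2\succeq 0$, $\mu_k>0$, and $d_{k,i}\ge\delta>0$ together give $B_k\succ 0$. You add detail the paper leaves implicit, namely the diagonal structure (hence symmetry), the explicit bound $\lambda_{\min}(B_k)\ge\mu_k\delta$, and the induction showing $\mu_k>0$ at every iteration.
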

\begin{proof}
Since $\Sigma_{k,p}^2\succeq 0$, $\mu_k>0$ and \eqref{eq:D}, hence $B_k\succ 0$.
\end{proof}
The next result shows that the positive definiteness of the reduced system translates into descent for the full objective. In particular, the deterministic adequacy condition prevents the projected gradient from vanishing when $g_k\neq0$, while $B_k\succ0$ ensures that the reduced LM solution produces a strictly negative directional derivative.
\begin{lemma}[Descent direction]\label{lem:descent}
Under Assumption \ref{ass:A3}, if $g_k\neq 0$, then the reduced LM step $s_k$ defined in \eqref{eq:reducedLM} satisfies $g_k^\top s_k<0$.
\end{lemma}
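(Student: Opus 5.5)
The approach is to write $g_k^\top s_k$ as a quadratic form in the reduced gradient with a positive definite matrix, and then use the adequacy condition to show that the reduced gradient is nonzero.

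First, by construction $s_k = V_k Z_k y_k$, where $y_k = -B_k^{-1}\Sigma_{k,p}U_k^\top r_k$. This system is well defined because $B_k\succ 0$ by Lemma~\ref{lem:spd}. Transposing \eqref{eq:gred} gives $g_k^\top V_k = r_k^\top U_k\Sigma_{k,p}Z_k^\top$. Since $Z_k$ is orthogonal, we can set $c_k := \Sigma_{k,p}U_k^\top r_k = Z_k^\top V_k^\top g_k$. This yields
\[
g_k^\top s_k = g_k^\top V_k Z_k y_k = -c_k^\top B_k^{-1} c_k .
\]
Because $B_k^{-1}\succ 0$, we get $g_k^\top s_k \le 0$, with equality if and only if $c_k = 0$.

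Second, we show $c_k\neq 0$. Since $Z_k$ is orthogonal, $\|c_k\| = \|V_k^\top g_k\|$. Assumption~\ref{ass:A3} then gives
\[
\|c_k\|^2 = \|V_k^\top g_k\|^2 = \eta^{\mathrm{sub}}_k\|g_k\|^2 \ge \eta_{\min}\|g_k\|^2 > 0,
\]
because $g_k\neq 0$ and $\eta_{\min}>0$. Therefore $g_k^\top s_k = -c_k^\top B_k^{-1}c_k<0$.

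The only point needing care is the dimensional bookkeeping in the SVD. If the thin SVD is used and $m<p$, then $\Sigma_{k,p}$ is not square $p\times p$ in the naive sense. One should check that $Z_k$ is $p\times p$ orthogonal, so that $\|Z_k^\top V_k^\top g_k\| = \|V_k^\top g_k\|$, and that $\Sigma_{k,p}U_k^\top$ is padded consistently. With the square convention implicit in \eqref{eq:svd}, both facts hold directly. This argument does not need Assumption~\ref{ass:A4}. That assumption would only upgrade the result to the quantitative bound $g_k^\top s_k\le -\eta_{\min}\|g_k\|^2/M_B$, which will be used later.
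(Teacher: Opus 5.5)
Your proof is correct and follows the same route as the paper: write $g_k^\top s_k=-c_k^\top B_k^{-1}c_k$ with $c_k=\Sigma_{k,p}U_k^\top r_k$, then use $B_k^{-1}\succ 0$ from Lemma~\ref{lem:spd}. You also use Assumption~\ref{ass:A3} explicitly to get $\|c_k\|^2=\|V_k^\top g_k\|^2\ge\eta_{\min}\|g_k\|^2>0$, which fills in the step the paper leaves implicit when it concludes strict negativity ``whenever $g_k\neq 0$''.
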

\begin{proof}
From \eqref{eq:gred} and \eqref{eq:reducedLM},
\[
\begin{aligned}
g_k^\top s_k 
&= g_k^\top V_k Z_ky_k\\[4pt]
&= (V_k^\top g_k)^\top  Z_ky_k\\[4pt]
&= (Z_k\Sigma_{k,p}U_k^\top r_k)^\top  Z_ky_k\\[4pt]
&= (\Sigma_{k,p}U_k^\top r_k)^\top  y_k\\[4pt]
&= -(\Sigma_{k,p}U_k^\top r_k)^\top  B_k^{-1}\Sigma_{k,p}U_k^\top r_k.
\end{aligned}
\]
By Lemma~\ref{lem:spd}, $B_k^{-1}\succ 0$, hence $g_k^\top s_k<0$ whenever $g_k\neq 0$.
\end{proof}
After showing $s_k$ is a strict descent direction, we next verify that the line-search procedure is well defined. By Lipschitz continuity of the gradient, the first-order decrease along $s_k$ dominates the quadratic Taylor remainder for sufficiently small step lengths, ensuring that the Armijo condition is eventually satisfied.
\begin{lemma}[Existence of an Armijo step]\label{lem:armijo}
Assume Assumption~\ref{ass:A2}. If $g_k^\top s_k<0$ and $\alpha \in (0,1)$, then there exists $\bar t_k>0$ such that for all $t\in(0,\bar t_k]$,
\[
F(x_k+t s_k)\le F(x_k)+\alpha t\, g_k^\top s_k,
\]
and Armijo backtracking terminates.
\end{lemma}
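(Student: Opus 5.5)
The plan is the standard descent-lemma argument. By Assumption~\ref{ass:A2}, for $t$ small enough that the segment $[x_k,x_k+ts_k]$ stays in the open neighborhood of $\mathcal{L}$ on which $\nabla F$ is $L_1$-Lipschitz, the fundamental theorem of calculus gives
\[
F(x_k+ts_k)-F(x_k)-t\,g_k^\top s_k=\int_0^1 t\,\big(\nabla F(x_k+\tau t s_k)-g_k\big)^\top s_k\,d\tau \le \frac{L_1}{2}t^2\norm{s_k}^2 .
\]
The Armijo inequality then holds as soon as $t\,g_k^\top s_k+\frac{L_1}{2}t^2\norm{s_k}^2\le \alpha t\,g_k^\top s_k$. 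This is equivalent to
\[
t\le \frac{2(1-\alpha)\,(-g_k^\top s_k)}{L_1\norm{s_k}^2}.
\]
The right-hand side is strictly positive because $\alpha<1$ and $g_k^\top s_k<0$; note also that $s_k\neq0$, since otherwise $g_k^\top s_k=0$.

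The delicate step is the domain of validity. The Lipschitz bound is only assumed on a neighborhood $\mathcal{N}$ of $\mathcal{L}$, and $x_k\in\mathcal{L}$ because the iterates are monotone by construction. Since $\mathcal{N}$ is open, there is $r>0$ with the ball $B(x_k,r)\subset\mathcal{N}$. I would therefore set
\[
\bar t_k:=\min\Big\{\frac{r}{\norm{s_k}},\ \frac{2(1-\alpha)(-g_k^\top s_k)}{L_1\norm{s_k}^2}\Big\}>0,
\]
so that the whole segment lies in $\mathcal{N}$ and the integral estimate above is legitimate. If the paper intends the Lipschitz inequality on $\mathcal{N}$ rather than only for $x,y\in\mathcal{L}$, this is immediate. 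Otherwise I would fall back on continuity of $\nabla F$: since $\phi(t)=F(x_k+ts_k)$ has $\phi'(0)=g_k^\top s_k<\alpha\,g_k^\top s_k$, the inequality $\phi(t)\le\phi(0)+\alpha t\phi'(0)$ holds on some interval $(0,\bar t_k]$ by the definition of the derivative. This fallback already proves existence without any Lipschitz constant.

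Finally, termination of backtracking follows because $\beta^j\to0$. The first $j$ with $\beta^j\le\bar t_k$ satisfies the Armijo condition, so $j_k\le \lceil \log(\bar t_k)/\log\beta\rceil$ is finite. As a byproduct, the accepted step satisfies $t_k\ge\min\{1,\beta\bar t_k\}$, which will be useful for the later global convergence argument.
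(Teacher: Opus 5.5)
Your proposal is correct and follows the paper's proof. The fundamental theorem of calculus plus the Lipschitz gradient gives $F(x_k+ts_k)-F(x_k)\le t\,g_k^\top s_k+\frac{L_1}{2}t^2\|s_k\|^2$, which yields the same threshold $\bar t_k=-2(1-\alpha)g_k^\top s_k/(L_1\|s_k\|^2)$, and backtracking terminates because $\beta^j\to 0$. Keeping the segment $[x_k,x_k+ts_k]$ inside the neighborhood where the Lipschitz bound holds (via the $r/\|s_k\|$ cap or the continuity-based fallback) is a valid refinement that the paper's proof silently skips, not a different route.
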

\begin{proof}
From Assumption~\ref{ass:A2}, we have
\[
\begin{aligned}
F(x_k + t s_k) - F(x_k)
&= \int_{0}^{1} \nabla F\!\bigl(x_k + y t s_k\bigr)^{\top} (t s_k)\, dy \qquad\qquad \text{(By Fundamental Theorem of Calculus)}\\[4pt]
&= \int_{0}^{1} \nabla F(x_k)^{\top} (t s_k)\, dy
  + \int_{0}^{1} \Bigl(\nabla F\!\bigl(x_k + y t s_k\bigr) - \nabla F(x_k)\Bigr)^{\top} (t s_k)\, dy \\[4pt]
&\le t\, g_k^{\top} s_k
  + \int_{0}^{1} L_1 \,\|y t s_k\|\, \|t s_k\|\, dy \\[4pt]
&\le \ t\, g_k^{\top} s_k
  + L_1t^2\|s_k\|^{2}\int_{0}^{1} y dy \\[4pt]
&\le t\, g_k^{\top} s_k + \frac{L_1}{2}\, t^{2}\, \|s_k\|^{2}.
\end{aligned}
\]
Then,
\[
F(x_k + t s_k) - F(x_k) - \alpha tg_k^\top s_k \le (1-\alpha)t g_k^\top s_k+\frac{L_1}{2}t^2\|s_k\|^2.
\]
Since $g_k^\top s_k<0$, it suffices to choose
\[
0<t\le \bar t_k := -\frac{2(1-\alpha)g_k^\top s_k}{L_1\|s_k\|^2}.
\]
Backtracking over $t=\beta^{j_k}$ will eventually produce $t\le \bar t_k$ for some $j_k$.
\end{proof}
We now combine the preceding lemmas to establish global convergence. Specifically, we show that every infinite sequence generated by the algorithm has a subsequence along which the gradient norm converges to zero.
\begin{theorem}[Global convergence to stationarity]\label{thm:global}
Under Assumptions~\ref{ass:A1}, \ref{ass:A2}, \ref{ass:A3}, and \ref{ass:A4}, either the algorithm terminates at a stationary point, or
\[
\liminf_{k\to\infty}\|g_k\|=0.
\]
\end{theorem}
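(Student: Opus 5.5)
Arguing by contradiction, suppose the algorithm never terminates and $\|g_k\|\ge\epsilon>0$ for all $k$ large. The plan follows the standard Zoutendijk-type argument for Armijo line searches. First, derive a sufficient-decrease bound of the form $F(x_k)-F(x_{k+1})\ge c\,\|g_k\|^2$ with $c>0$ independent of $k$. Then sum these bounds and use Assumption~\ref{ass:A1} to conclude that $\sum_k\|g_k\|^2<\infty$, which contradicts $\|g_k\|\ge\epsilon$. The Armijo condition gives $F(x_{k+1})\le F(x_k)$, so all iterates stay in $\mathcal{L}$. This justifies using Assumptions~\ref{ass:A1} and \ref{ass:A2} along the whole sequence.

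The key quantitative step relates $g_k^\top s_k$ and $\|s_k\|$ to $\|V_k^\top g_k\|$. Write $w_k:=\Sigma_{k,p}U_k^\top r_k$. By \eqref{eq:gred}, $V_k^\top g_k=Z_k w_k$, and since $Z_k$ is orthogonal, $\|w_k\|=\|V_k^\top g_k\|$. The computation in Lemma~\ref{lem:descent} gives $g_k^\top s_k=-w_k^\top B_k^{-1}w_k$. Assumption~\ref{ass:A4} then yields
\[
-g_k^\top s_k\ \ge\ \frac{1}{M_B}\|V_k^\top g_k\|^2 .
\]
Because $V_k$ has orthonormal columns and $Z_k$ is orthogonal, we also have $\|s_k\|=\|y_k\|=\|B_k^{-1}w_k\|$, and hence
\[
\|s_k\|\ \le\ \frac{1}{m_B}\|V_k^\top g_k\| .
\]
Combining these two bounds with Assumption~\ref{ass:A3} gives
\[
-g_k^\top s_k\ \ge\ \frac{\eta_{\min}}{M_B}\|g_k\|^2
\qquad\text{and}\qquad
\frac{-g_k^\top s_k}{\|s_k\|^2}\ \ge\ \frac{m_B^2}{M_B}.
\]

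Next, bound the accepted step length below uniformly. Backtracking either accepts $t_k=1$, or accepts $t_k$ with $t_k/\beta$ rejected. By Lemma~\ref{lem:armijo}, every $t\le\bar t_k$ is acceptable, so a rejected $t_k/\beta$ must exceed $\bar t_k$. Therefore $t_k\ge\min\{1,\beta\bar t_k\}$. From the ratio bound above,
\[
\bar t_k=\frac{2(1-\alpha)(-g_k^\top s_k)}{L_1\|s_k\|^2}\ \ge\ \frac{2(1-\alpha)m_B^2}{L_1M_B},
\]
so $t_k\ge\bar t:=\min\{1,\,2\beta(1-\alpha)m_B^2/(L_1M_B)\}>0$. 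Substituting into the Armijo inequality gives
\[
F(x_k)-F(x_{k+1})\ \ge\ \alpha\,\bar t\,\frac{\eta_{\min}}{M_B}\|g_k\|^2 .
\]
Summing from $0$ to $K$ telescopes the left side to $F(x_0)-F(x_{K+1})$. Assumption~\ref{ass:A1} bounds this uniformly in $K$, so $\sum_k\|g_k\|^2<\infty$. This forces $\|g_k\|\to0$, which is stronger than the stated $\liminf$ and gives the contradiction.

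The main obstacle is the step-length lower bound. It needs the ratio $-g_k^\top s_k/\|s_k\|^2$ to be bounded away from zero, which requires both eigenvalue bounds of Assumption~\ref{ass:A4} together with the orthonormality of $V_k$ and $Z_k$. The bound $\|s_k\|\le\|w_k\|/m_B$ uses that $V_kZ_k$ is an isometry. If $V_k$ is only approximately orthonormal after the subspace expansion, this constant would need adjusting. Termination is handled separately: if $g_k=0$ at some iteration, the algorithm stops at a stationary point, which is the first alternative in Theorem~\ref{thm:global}.
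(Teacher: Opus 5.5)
Your proposal is correct and follows the paper's proof essentially step for step. It uses the same bounds $\|s_k\|\le\|V_k^\top g_k\|/m_B$ and $-g_k^\top s_k\ge\|V_k^\top g_k\|^2/M_B$, the same uniform step-length bound $t_k\ge\min\{1,\,2\beta(1-\alpha)m_B^2/(L_1M_B)\}$ obtained from Lemma~\ref{lem:armijo}, and the same appeal to Assumption~\ref{ass:A1}. The one difference is that you sum the decrease inequality with $\|g_k\|^2$ where the paper uses the constant $\varepsilon^2$. This makes the contradiction hypothesis unnecessary and gives the stronger conclusion $\lim_{k\to\infty}\|g_k\|=0$.
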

\begin{proof}
Assume the algorithm does not terminate and suppose, for contradiction, that there exists $\varepsilon>0$ such that $\|g_k\|\ge \varepsilon$ for all $k$. Lemma~\ref{lem:armijo} shows Armijo backtracking holds for all $t\le \bar t_k$ with
\[
\bar t_k=-\frac{2(1-\alpha)g_k^\top s_k}{L_1\|s_k\|^2}.
\]
To prevent $\bar t_k\to 0$, we want to bound $\|s_k\|$ and $g_k^\top s_k$.\\
Consider $y_k=-B_k^{-1}\Sigma_{k,p}U_k^\top r_k$ and $\|s_k\|=\|V_kZ_k y_k\|=\|y_k\|$,
\[
\begin{aligned}
\|s_k\| 
&\le \|B_k^{-1}\|\,\|\Sigma_{k,p}U_k^\top r_k\|  \\[4pt]
&\le \frac{1}{\lambda_{\min}(B_k)}\|\Sigma_{k,p}U_k^\top r_k\| \\[4pt]
&= \frac{1}{\lambda_{\min}(B_k)}\|Z_{k}\Sigma_{k,p}U_k^\top r_k\| \\[4pt]
&= \frac{1}{\lambda_{\min}(B_k)}\|V_k^\top g_k\|.    
\end{aligned}
\]
From Lemma~\ref{lem:descent} and \eqref{eq:gred},
\[
\begin{aligned}
-g_k^\top s_k 
&= (\Sigma_{k,p}U_k^\top r_k)^\top  B_k^{-1}\Sigma_{k,p}U_k^\top r_k \\[4pt]
&\ge \frac{1}{\lambda_{\max}(B_k)}\|\Sigma_{k,p}U_k^\top r_k\|^2 \\[4pt]
&= \frac{1}{\lambda_{\max}(B_k)}\|Z_{k}\Sigma_{k,p}U_k^\top r_k\|^2 \\[4pt]
&= \frac{1}{\lambda_{\max}(B_k)}\|V_k^\top g_k\|^2.    
\end{aligned}
\]
By Assumption~\ref{ass:A4} and ~\eqref{eq:eta_sub},
\[
-g_k^\top s_k
\ge \frac{1}{M_B}\,\eta_{\min}\,\|g_k\|^2
\ge \frac{\eta_{\min}}{M_B}\varepsilon^2
>0.
\]
Then,
\[
\frac{-g_k^\top s_k}{\|s_k\|^2}
\ge \frac{\lambda_{\min}(B_k)^2}{\lambda_{\max}(B_k)}.
\]
From assumption~\ref{ass:A4}, we have $\bar t_k\ge \bar t =\dfrac{2(1-\alpha)m_B^2}{M_BL_1}>0$.\\
Armijo backtracking then returns $t_k\ge  \min\{1,\beta\bar t\}$, which satisfy
\[
F(x_{k+1}) \le F(x_k)+\alpha t_k g_k^\top s_k
\le F(x_k) - \alpha t_k \frac{\eta_{\min}}{M_B}\varepsilon^2.
\]
Thus, $F(x_k)\to -\infty$ as $k \to \infty$, contradicting Assumption~\ref{ass:A1}. Therefore $\liminf_{k\to\infty}\|g_k\|=0$.
\end{proof}

\subsection{Local Convergence}

Now, we introduce the additional assumptions required for the local convergence analysis. Unlike the global assumptions, which ensure sufficient descent and convergence toward stationarity, these conditions describe the local geometry of the objective and require the subspace and curvature approximations to become increasingly accurate as the iterates approach a solution. The key identity for local analysis is the exact Hessian decomposition 
\[ \nabla^2 F(x)=J(x)^\top J(x)\;+S(x) \quad \text{ where } S(x) = \sum_{i=1}^m r_i(x)\,\nabla^2 r_i(x) \] 
where $J(x)^\top J(x)$ is the Gauss–Newton curvature and $S(x)$ collects residual-dependent second-derivative. The Levenberg–Marquardt algorithm is built on the Gauss–Newton approximation $H(x) \approx J^TJ$ and computes steps from a damped system. In this study, we analyze the local convergence of the proposed algorithm under the assumption of zero (or very small) residual at the optimal solutions.

We first impose the standard local regularity conditions used in Newton-type convergence analyses ~\cite{DennisSchnabel1996,NocedalWright2006}. Positive definiteness of the Hessian at $x^\star$ ensures that the solution is locally isolated and that the objective has stable quadratic curvature near the minimizer. Lipschitz continuity of the Hessian controls the remainder in the second-order Taylor expansion and is essential for deriving a local error recursion.
\begin{assumption}[Strict local minimizer and Lipschitz Hessian]\label{ass:L1}
There exists a strict local minimizer $x^\star$ such that $g(x^\star)=0$ and $\nabla^2F(x^\star)\succ 0$.
Moreover, $\nabla^2F(x)$ is Lipschitz in a neighborhood $U$ of $x^\star$:
$\|\nabla^2F(x)-\nabla^2F(y)\|\le L_2\|x-y\|$ for all $x,y\in U$.
\end{assumption}
The next assumption accounts for the evolving reduced subspace, which arise in reduced and randomized subspace methods ~\cite{Cartis2022,Shao2021}. Global convergence requires only that the subspace retain a fixed amount of gradient information, whereas fast local convergence requires the subspace approximation to improve as $x_k$ approaches $x^\star$. Accordingly, the component of the gradient omitted by the subspace must vanish, the reduced space must become increasingly invariant under the local curvature operator, and the LM regularization must disappear as the subspace approximation becomes more accurate.
\begin{assumption}[Vanishing subspace/curvature errors and regularization]\label{ass:L2}
Let $P_k:=V_k V^\top_k$ and define $\gamma_k := \|(I-P_k)g_k\|/\|g_k\|$. Suppose $x_k\to x^\star$, $\gamma_k\to 0$, $\mu_k\to 0$, and there exists $\varepsilon_k\to 0$ such that
\[
\|H_kP_k - P_kH_kP_k\|\le \varepsilon_k\|H_k\|.
\]
\end{assumption}
Here, $\gamma_k$ measures the relative portion of the gradient missed by the subspace, while $\epsilon_k$ measures the relative curvature (Hessian) information not captured by the subspace. As $\gamma_k,\epsilon_k \to 0$, the reduced subspace increasingly captures both the relevant gradient and curvature information near $x^\ast$. Finally, the local curvature approximation used in the LM system must become sufficiently accurate near the solution. The following condition requires the Hessian approximation error to decrease at least linearly with the distance to $x^\star$, which ensures that the Hessian approximation error becomes negligible at the rate required for fast local convergence. Consequently, its contribution to the local iterate-error recursion is of quadratic order, allowing the method to recover the fast local behavior of a Newton-type iteration.
\begin{assumption}[Vanishing Hessian approximation error near $x^\star$]\label{ass:L3}
Assume there exist $c>0$ such that for all $x$ close enough to $x^\star$,
\[
\|S(x)\|\le c\|x-x^\star\|.
\]
\end{assumption}

To analyze the local convergence rate, we next quantify how accurately the reduced regularized step satisfies the full Newton equation. The following lemma provides a bound on the step norm and decomposes the exact Newton residual into four distinct error components: the gradient component omitted by the subspace, the lack of curvature invariance of the subspace, the LM regularization, and the Hessian-approximation error. This decomposition makes explicit the conditions under which the reduced LM step becomes an increasingly accurate inexact Newton step near $x^\star$.
\begin{lemma}[Exact Newton residual bound]\label{lem:inexact}
Let $P_k:=V_kV_k^\top$ and suppose $s_k=V_k Z_ky_k$ satisfies a reduced regularized system
\[
(Z_k^\top V_k^\top H_k V_kZ_k + R_k) y_k = -Z_k^\top V_k^\top g_k,
\]
where $R_k = \mu_kD_{k,p}\succeq 0$ and $Z_k^\top V_k^\top H_k V_k Z_k + R_k \succeq m_BI$ for some $m_B>0$.
Then:
\begin{enumerate}
\item $\|s_k\| = \|y_k\|\le \dfrac{1}{m_B}\|V_k^\top g_k\|\le\dfrac{1}{m_B}\|g_k\|$.
\item Defining the exact Newton residual $R_k^{exact}:=\nabla^2F(x_k) s_k + g_k$,
\[
\|R_k^{exact}\|
\le \|(I-P_k)g_k\| + \|H_kP_k - P_kH_kP_k\|\,\|s_k\| + \|R_k\|\,\|y_k\| +\|S_k\|\|s_k\|.
\]
\end{enumerate}
\end{lemma}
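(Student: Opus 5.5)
For part 1, set $W_k:=V_kZ_k$. Since $Z_k$ is orthogonal ($p\times p$) and $V_k$ has orthonormal columns, $W_k$ has orthonormal columns, so $\|s_k\|=\|W_ky_k\|=\|y_k\|$. Write $M_k:=Z_k^\top V_k^\top H_kV_kZ_k+R_k$. This matrix is symmetric and satisfies $M_k\succeq m_BI$, so $\|M_k^{-1}\|\le 1/m_B$. It follows that
\[
\|y_k\|=\|M_k^{-1}Z_k^\top V_k^\top g_k\|\le \tfrac{1}{m_B}\|Z_k^\top V_k^\top g_k\|=\tfrac{1}{m_B}\|V_k^\top g_k\|.
\]
The last bound $\|V_k^\top g_k\|\le\|g_k\|$ holds because $\|V_k^\top g_k\|=\|P_kg_k\|$ and $P_k$ is an orthogonal projector. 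This repeats the argument already used in the proof of Theorem~\ref{thm:global}.

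For part 2, I plan to split the exact Hessian as $\nabla^2F(x_k)=H_k+S_k$ with $H_k=J_k^\top J_k$. Then $R_k^{exact}=H_ks_k+g_k+S_ks_k$, and the term $S_ks_k$ contributes $\|S_k\|\|s_k\|$. For the remaining term, I insert the projector. Because $s_k\in\mathrm{range}(V_k)$, we have $P_ks_k=s_k$, so
\[
H_ks_k+g_k=(H_kP_k-P_kH_kP_k)s_k+P_kH_kP_ks_k+P_kg_k+(I-P_k)g_k.
\]
The first and last pieces give $\|H_kP_k-P_kH_kP_k\|\|s_k\|$ and $\|(I-P_k)g_k\|$. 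It remains to show that $P_kH_ks_k+P_kg_k=-W_kR_ky_k$.

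To see this, multiply the reduced system on the left by $W_k$. Note that $W_kZ_k^\top V_k^\top=V_kZ_kZ_k^\top V_k^\top=P_k$, so $W_kZ_k^\top V_k^\top g_k=P_kg_k$. Likewise $W_kZ_k^\top V_k^\top H_kV_kZ_ky_k=P_kH_ks_k$. Hence $P_kH_ks_k+W_kR_ky_k=-P_kg_k$. Because $W_k$ has orthonormal columns, $\|W_kR_ky_k\|\le\|R_k\|\|y_k\|$. Applying the triangle inequality to the four pieces then gives the stated bound.

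The main obstacle is bookkeeping: one must check that lifting the reduced equation by $W_k$ produces exactly $P_k$ and not some other projector. This needs $Z_k$ to be square orthogonal, which holds because it comes from the SVD of the $m\times p$ matrix $J_{k,p}$ in \eqref{eq:svd}. One must also use $P_ks_k=s_k$ so that the commutator term $H_kP_k-P_kH_kP_k$ appears in the form stated. No assumptions beyond $M_k\succeq m_BI$ are needed.
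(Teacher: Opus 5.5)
Your proposal is correct and matches the paper's proof essentially line for line. Part 1 uses $\|B_k^{-1}\|\le 1/m_B$ together with the orthonormality of $V_kZ_k$; part 2 splits $\nabla^2F(x_k)=H_k+S_k$, uses $P_ks_k=s_k$ to isolate $(I-P_k)H_kP_k=H_kP_k-P_kH_kP_k$ and $(I-P_k)g_k$, and multiplies the reduced system by $V_kZ_k$ to obtain $P_k(H_ks_k+g_k)=-V_kZ_kR_ky_k$, with your four-term expansion being just the paper's split $R_k^{(N)}=(I-P_k)R_k^{(N)}+P_kR_k^{(N)}$ written out before the triangle inequality.
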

\begin{proof}

(1) Let $B_k:=Z_k^\top V_k^\top H_k V_k Z_k + R_k\succeq m_B I$, so $\|B_k^{-1}\|\le \dfrac{1}{m_B}$.
Then
\[
y_k=-B_k^{-1}Z_k^\top V_k^\top g_k  
\implies 
\|y_k\|\le \dfrac{1}{m_B}\|Z_k^\top V_k^\top g_k\|
= \dfrac{1}{m_B}\|V_k^\top g_k\|
\le \dfrac{1}{m_B}\|g_k\|.
\]
Since $V_k$ has orthonormal columns and $Z_k$ is orthogonal, $\|s_k\|=\|V_k Z_k y_k\|=\|y_k\|$.\\
(2) Let $R_k^{exact}:=(H_k+S_k)s_k + g_k = R_k^{(N)} +S_ks_k$ where $R_k^{(N)} = H_ks_k+ g_k$. 

Consider $R_k^{(N)}=(I-P_k)R_k^{(N)} + P_k R_k^{(N)}$ and $s_k = P_ks_k$, we have
\begin{equation}\label{eq:1}
\begin{aligned}
\|(I-P_k)R_k^{(N)}\| 
&= \|(I-P_k)H_kP_k s_k + (I-P_k)g_k \| \\[4pt]
&\le\|(I-P_k)H_kP_k\|\|s_k\| +\|(I-P_k)g_k\|\\[4pt]
&= \|H_kP_k - P_kH_kP_k\|\,\|s_k\| +\|(I-P_k)g_k\|.
\end{aligned}
\end{equation}
By multiplying the reduced regularized system by $V_kZ_k$, we also have
\[
\begin{aligned} 
V_kZ_kZ_k^\top V_k^\top H_k V_kZ_ky_k + V_kZ_kR_k y_k
&= -V_kZ_kZ_k^\top V_k^\top g_k \\[4pt]
P_kH_k s_k + V_kZ_kR_k y_k &= -P_k g_k \\[4pt]
P_k(H_k s_k + g_k) &= -V_kZ_kR_k y_k \\[4pt]
P_k R_k^{(N)} &= -V_kZ_kR_k y_k.
\end{aligned}
\]
Then 
\begin{equation}\label{eq:2}
\|P_k R_k^{(N)}\| = \|-V_kZ_kR_k y_k\|
\le \|V_k\|\|Z_k\|\|R_k\|\|y_k\|
=  \|R_k\|\|y_k\|
\end{equation}
Combining the estimates \ref{eq:1} and \ref{eq:2} yields the central local error recursion
\[
\begin{aligned}
  \|R_k^{exact}\|  
&\le\|R_k^{(N)}\| +\|S_ks_k\| \\[4pt]
&\le \|(I-P_k)R_k^{(N)}\| +\|P_k R_k^{(N)}\| +\|S_ks_k\|\ \\[4pt]
&\le\|H_kP_k - P_kH_kP_k\|\,\|s_k\|+\|(I-P_k)g_k\|+\|R_k\|\|y_k\|+\|S_k\|\|s_k\|.  
\end{aligned}
\]
\end{proof}
Using the exact Newton residual bound in Lemma ~\ref{lem:inexact}, we relate the local convergence rate to the combined subspace, curvature, regularization, and Hessian-approximation errors. Once full steps are accepted, a uniformly controlled residual yields local linear convergence, while vanishing residual components make the reduced LM step asymptotically equivalent to the exact Newton step and therefore produce superlinear convergence.
\begin{theorem}[Local linear and superlinear convergence]\label{thm:local}
Assume Assumptions~\ref{ass:A4}--\ref{ass:L3}.
Let $e_k:=x_k-x^\star$ and assume eventually $t_k=1$.
Then:
\begin{enumerate}
\item If $\theta_k:=C(\gamma_k+\varepsilon_k+\|R_k\| +\|S_k\|)\le \bar\theta<1$ for all sufficiently large $k$, then $\|e_{k+1}\|\le q\|e_k\|$ for some $q<1$ (linear convergence).
\item If $\theta_k\to 0$, then $\|e_{k+1}\|/\|e_k\|\to 0$ (superlinear convergence).
\end{enumerate}
\end{theorem}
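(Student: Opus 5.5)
We will treat the reduced LM step as an inexact Newton step and propagate the residual bound of Lemma~\ref{lem:inexact} into an error recursion. Since eventually $t_k=1$, we have $x_{k+1}=x_k+s_k$, so
\[
e_{k+1}=e_k+s_k=e_k+\nabla^2F(x_k)^{-1}\bigl(R_k^{exact}-g_k\bigr).
\]
Hence
\[
\|e_{k+1}\|\le \|\nabla^2F(x_k)^{-1}\|\bigl(\|\nabla^2F(x_k)e_k-g_k\|+\|R_k^{exact}\|\bigr).
\]
By Assumption~\ref{ass:L1}, $\nabla^2F(x^\star)\succ0$ and the Hessian is continuous. So in a neighborhood of $x^\star$ we get $\|\nabla^2F(x_k)^{-1}\|\le 2\|\nabla^2F(x^\star)^{-1}\|=:K$. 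The Lipschitz Hessian gives the standard Taylor bound
\[
\|g_k-\nabla^2F(x_k)e_k\|=\|g_k-g(x^\star)-\nabla^2F(x_k)e_k\|\le \tfrac{L_2}{2}\|e_k\|^2.
\]
Together with $\|g_k\|\le M\|e_k\|$ for a local bound $M$ on the Hessian, this gives $\|e_{k+1}\|\le K\bigl(\tfrac{L_2}{2}\|e_k\|^2+\|R_k^{exact}\|\bigr)$.

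Next we bound $\|R_k^{exact}\|$ using Lemma~\ref{lem:inexact}, normalizing every term by $\|g_k\|$:
\begin{itemize}
\item Item (1), with Assumption~\ref{ass:A4}, gives $\|s_k\|=\|y_k\|\le \|g_k\|/m_B$. Here $Z_k^\top V_k^\top H_kV_kZ_k+R_k=B_k$ by \eqref{eq:6}, so the lemma applies.
\item By definition, $\|(I-P_k)g_k\|=\gamma_k\|g_k\|$.
\item By Assumption~\ref{ass:L2}, $\|H_kP_k-P_kH_kP_k\|\le \varepsilon_k\|H_k\|$, and $\|H_k\|$ is locally bounded by some $M_H$.
\end{itemize}
Collecting terms,
\[
\|R_k^{exact}\|\le\Bigl(\gamma_k+\tfrac{M_H}{m_B}\varepsilon_k+\tfrac{1}{m_B}\|R_k\|+\tfrac{1}{m_B}\|S_k\|\Bigr)\|g_k\|\le \tfrac{C_0}{M}\,(\gamma_k+\varepsilon_k+\|R_k\|+\|S_k\|)\,M\|e_k\|.
\]
We choose $C:=K M\max\{1,M_H/m_B,1/m_B\}$, which absorbs $K$ and $M$. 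This yields the recursion
\[
\|e_{k+1}\|\le \tfrac{KL_2}{2}\|e_k\|^2+\theta_k\|e_k\|.
\]

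The two conclusions follow from this recursion.
\begin{enumerate}
\item For (1), suppose $\theta_k\le\bar\theta<1$. We restrict to a ball where $\tfrac{KL_2}{2}\|e_k\|\le (1-\bar\theta)/2$; this is possible because $x_k\to x^\star$ by Assumption~\ref{ass:L2}, or alternatively by induction once an iterate lands in the ball. Then $\|e_{k+1}\|\le q\|e_k\|$ with $q=(1+\bar\theta)/2<1$.
\item For (2), dividing the recursion by $\|e_k\|$ gives $\|e_{k+1}\|/\|e_k\|\le \tfrac{KL_2}{2}\|e_k\|+\theta_k\to0$, because $e_k\to0$ and $\theta_k\to0$. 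Assumption~\ref{ass:L3} explains why $\|S_k\|\to0$ is consistent with this, since $\|S_k\|\le c\|e_k\|$.
\end{enumerate}

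The main obstacle is the constant bookkeeping. We must keep all the local bounds ($K$, $M$, $M_H$, the Lipschitz neighborhood $U$) valid simultaneously in a single ball around $x^\star$, and make sure the iterates stay in that ball. We would handle this with a standard induction: first fix a radius $r$ on which all the bounds hold and the quadratic term is at most $(1-\bar\theta)/2$; then the contraction keeps every subsequent iterate in the ball. A minor point is that the constant $C$ in $\theta_k$ must be the one produced by this argument; we read the statement as asserting the existence of such a $C$.
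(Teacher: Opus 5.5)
Your proposal is correct and follows essentially the same route as the paper. Both arguments write $\nabla^2F(x_k)e_{k+1}=R_k^{exact}-(g_k-\nabla^2F(x_k)e_k)$ and bound the Taylor remainder by $\frac{L_2}{2}\|e_k\|^2$; both then control $\|R_k^{exact}\|$ in units of $\|g_k\|$ via Lemma~\ref{lem:inexact} with Assumptions~\ref{ass:A4} and~\ref{ass:L2}, convert to $\|e_k\|$, and conclude from $\|e_{k+1}\|\le a\|e_k\|^2+\theta_k\|e_k\|$. Your bound $\|g_k\|\le M\|e_k\|$ (in place of the paper's $M_1\|e_k\|+\frac{L_2}{2}\|e_k\|^2$), your explicit choice of $C$, and your ball-invariance remark are cosmetic refinements that make the constant bookkeeping slightly cleaner.
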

\begin{proof}
Since $\nabla^2F(x^\star)\succ 0$ and $\nabla^2F$ is Lipschitz near $x^\star$, there exist $m_1,M_1>0$ and a neighborhood $U$ such that
$m_1 I \preceq \nabla^2F(x)\preceq M_1 I$ for all $x\in U$ \citep{DennisSchnabel1996,NocedalWright2006}. Since $\nabla F(x^\star) = 0$, we have 
\[
\begin{aligned}
g_k = \nabla F(x_k) - \nabla F(x^\star) 
&= \int_0^1 \nabla^2 F(x^\star+te_k)e_kdt \qquad\qquad \text{(By Fundamental Theorem of Calculus)} \\[4pt]
&= \int_0^1 \left[\nabla^2 F(x^\star+te_k) - \nabla^2 F(x_k)e_k\right] dt +\int_0^1 \nabla^2 F(x_k)e_kdt \\[4pt]
&= h_k+\nabla^2F(x_k)e_k.
\end{aligned}
\]
where $h_k = \int_0^1 \left[\nabla^2 F(x^\star+te_k) - \nabla^2 F(x_k)\right]e_k dt$. Using Hessian Lipschitz continuity, we also have 
\[
\|h_k\| \le \|e_k\|\int_0^1 \|\nabla^2 F(x^\star+te_k) - \nabla^2F(x_k)\| dt\le \|e_k\|\int_0^1 L_2(1-t)\|e_k\| dt = \dfrac{L_2}{2}\|e_k\|^2
\]
Define the Newton residual $R_k^{exact}:=\nabla^2F(x_k) s_k + g_k$.
Since $H(x)=J(x)^{\top}J(x)$ is continuous in a neighborhood of $x^\ast$, there exists $M_H>0$ such that $\|H_k\|\le M_H$ for all sufficiently large $k$. By Lemma~\ref{lem:inexact} and Assumption~\ref{ass:L2},
\[
\begin{aligned}
\|R_k^{exact}\| 
&\le \|H_kP_k - P_kH_kP_k\|\,\|s_k\|+\|(I-P_k)g_k\|+\|R_k\|\|y_k\|+\|S_k\|\|s_k\| \\[4pt]
&\le \epsilon_k\|H_k\|\dfrac{1}{m_B}\|g_k\|+\gamma_k\|g_k\|+\|R_k\|\dfrac{1}{m_B}\|g_k\|+\|S_k\|\dfrac{1}{m_B}\|g_k\| \\[4pt]
&\le (\dfrac{M_H\epsilon_k}{m_B}+\gamma_k+\dfrac{\|R_k\|}{m_B}+\dfrac{\|S_k\|}{m_B})\|g_k\|.
\end{aligned}
\]
At $t_k=1$, $e_{k+1}=e_k+s_k$ and
\[
\nabla^2F(x_k) e_{k+1} = \nabla^2F(x_k) e_k + \nabla^2F(x_k) s_k = (g_k-h_k) + (R_k^{exact}-g_k) = R_k^{exact}-h_k.
\]
Then,
\begin{equation}\label{eq:3}
\|e_{k+1}\|\le \|\nabla^2F(x_k)^{-1}\|(\|R_k^{exact}\|+\|h_k\|)
\le \dfrac{1}{m_1}\left[\left(\dfrac{M_H\epsilon_k}{m_B}+\gamma_k+\dfrac{\|R_k\|}{m_B}+\dfrac{\|S_k\|}{m_B}\right)\|g_k\|+\dfrac{L_2}{2}\|e_k\|^2\right].    
\end{equation}
We also have 
\begin{equation}\label{eq:4}
\|g_k\|=\|\nabla^2F(x_k) e_k + h_k\|\le \|\nabla^2F(x_k)\| \|e_k\| + \|h_k\| \le M_1\|e_k\|+\dfrac{L_2}{2}\|e_k\|^2
\end{equation}
Combine \ref{eq:3} and \ref{eq:4}, we have
\[
\|e_{k+1}\|\le a\|e_k\|^2 + \,\theta_k \|e_k\| \qquad \text{ for some constant } a>0
\]
If $\theta_k\le \bar\theta<1$ eventually, choose $\bar\theta<q<1$ and take sufficient large 
$k$ so that $a\|e_k\|\le q - \bar\theta$, yielding $\|e_{k+1}\|\le q\|e_k\|$.\\
If $\theta_k\to 0$, then $\|e_{k+1}\|=o(\|e_k\|)$, giving superlinear convergence.
\end{proof}
\begin{remark}
The local convergence analysis above is carried out using the Gauss--Newton/LM curvature approximation
$H(x)=J(x)^\top J(x)$. Therefore, the superlinear local behavior established in this section is only applicable to the zero-residual (or very small) regime, i.e.,
\[
r(x^\star)\approx0.
\]
In contrast, if $r(x^\star)\neq 0$, then the additional term $S(x)$ in $\nabla^2F(x^\star)$ is
generally nonzero, so $J(x^\star)^\top J(x^\star)$ does not coincide with the true Hessian.
In that case, the HSLM method is typically expected to be at best locally linear.
\end{remark} 
\section{Numerical Experiments}
We evaluate the HSLM algorithm by training feedforward neural networks for nonlinear function approximation. Feedforward neural networks are suitable test problems because their training can be formulated as a nonlinear least-squares problem whose size can be systematically increased by varying the network architecture and training dataset, while preserving the same approximation objective. Their use is motivated by their ability to represent complex nonlinear relationships and by classical universal approximation results \citep{cybenko1989approximation, hornik1989multilayer,barron1993universal}. Although the classical LM method is known for rapid convergence and strong robustness when training small- to medium-sized neural networks \citep{Ampazis,KimLee,yu2018levenberg}, its computational and memory costs increase significantly with the number of parameters and data points. In this section, we compare classical LM, Krylov-subspace LM, and HSLM across different network and dataset sizes and measure performance using training and validation errors, convergence behavior, and computational cost. Our experiments evaluate whether HSLM can maintain convergence behavior and solution quality comparable to classical LM while reducing computational cost as the problem size increases.

\subsection{Experimental Setup}
The neural network is trained to approximate the following Friedman function:
\[
f(\mathbf{z}) = 10\sin(\pi z_1z_2)+20(z_3-\dfrac{1}{2})^2 +10z_4+5z_5
\]
where $\mathbf{z} = (z_1,z_2,...,z_7)\in [-1,1]^{7}$. This modified nonlinear-regression benchmark, which was introduced by Friedman \cite{friedman1991mars}, combines trigonometric, linear, and polynomial components, making it well-suited for evaluating the nonlinear least-squares algorithms. 

Training input samples are drawn uniformly from $[-1,1]^{7}$, and corresponding outputs are computed from Friedman's function. Three training-set sizes are considered (Table~\ref{tab:network_configs}), while 1000 noise-free samples are reserved for validation. To assess robustness, additive zero-mean Gaussian noise with standard deviation equal to 5\% of the standard deviation of the target signal, $\sigma_Y$, is applied to the training output,
\[
\mathbf{\hat{y}} = f(\mathbf{z})+\epsilon \qquad\text{where}\quad \epsilon \sim \mathcal{N}\\(\mathbf{0},\sigma^2),\quad \sigma =0.05\sigma_Y
\]

A fully connected multilayer perceptron (MLP) with two hidden layers is used to approximate the target function \cite{yu2018levenberg}. For an input vector $\mathbf{z}\in \mathbf{R}^{7}$, the hidden layers use hyperbolic tangent activation function, while the output layer is linear
\[
\begin{aligned}
\mathbf{h}^{(1)}
    &= \tanh\!\left(\mathbf{W}^{(1)}\mathbf{z}
       + \mathbf{b}^{(1)}\right), \\
\mathbf{h}^{(2)}
    &= \tanh\!\left(\mathbf{W}^{(2)}\mathbf{h}^{(1)}
       + \mathbf{b}^{(2)}\right), \\
\mathbf{y}
    &= \mathbf{W}^{(3)}\mathbf{h}^{(2)}
       + \mathbf{b}^{(3)}.
\end{aligned}
\]
where $\mathbf{W}^{(l)}$ and $\mathbf{b}^{(l)}$ denote the weight matrices and bias vectors of layer $l$, respectively. 

Three network architectures, with increasing numbers of training parameters ($\mathbf{W}^{(l)}$ and $\mathbf{b}^{(l)}$) are described in Table~\ref{tab:network_configs}, allowing the dimension of the nonlinear least-squares problem to systematically increase while using the same underlying regression task.
\begin{table}[H]
\centering
\caption{Neural network configurations with increasing model complexity and training data size. Network architecture is reported as the number of neurons in the input layer, the two hidden layers, and the output layer, respectively.}
\begin{tabular}{c c c c}
\hline
\textbf{Network} & \textbf{Architecture} & \textbf{Trainable Parameters}  & \textbf{Training data size}\\
\hline
1 & $7 \rightarrow 35 \rightarrow  20 \rightarrow 1$ & 1021 & 10000\\
2 & $7 \rightarrow 60  \rightarrow 25 \rightarrow 1$ & 2031 & 20000\\
3 & $7 \rightarrow 80  \rightarrow 40 \rightarrow 1$ & 3921 & 40000\\
\hline
\end{tabular}
\label{tab:network_configs}
\end{table}

By collecting all trainable weights and biases into a single parameter vector $\mathbf{x}$, training the neural network can be formulated as a nonlinear least-squares problem
\[
\min_\mathbf{x}\dfrac{1}{2}\|\mathbf{r(x)}\|^2
\]
where the residual vector is 
\[
\mathbf{r}(\mathbf{x})
=
\begin{bmatrix}
y_1(\mathbf{x})-\hat{y}_1 \\
y_2(\mathbf{x})-\hat{y}_2 \\
\vdots \\
y_N(\mathbf{x})-\hat{y}_N
\end{bmatrix},
\]
with $y_i(\mathbf{\mathbf{x}})$, $\hat{y_i}$ denoting the network prediction for the $i^{th}$ training input and the corresponding noisy target output. The Jacobian matrix 
\[
\mathbf{J}(\mathbf{x}) = \dfrac{\partial \mathbf{r}}{\partial \mathbf{x}}
\]
is computed efficiently using a modified backpropagation procedure, which is described in detail in \cite{HaganMenhaj1994}. 

In our implementation, the maximum subspace dimension in both Krylov subspace LM and HSLM is limited to 10\% of the parameter dimension. HSLM constructs the reduced basis using the hybrid strategy described in Section 3, whereas Krylov-subspace LM uses Lanczos-based projection of the scaled LM system, consistent with the Krylov subspace framework described in \cite{Lin2016}. Additional implementation details are provided in Appendix A.
\subsection{Results}
We compare the three algorithms in terms of convergence behavior and computational efficiency as the size of the nonlinear least-squares problem increases. For each network architecture, 50 trials are performed using randomly generated initial weights $\mathbf{W^{(l)}}$ and bias $\mathbf{b^{(l)}}$. Figure 1 shows five representative runs for the largest network using identical initializations across the three algorithms, while Table~\ref{tab:nn_training_comparison1} summarizes the average number of iterations and computational time per iteration over all 50 trials. We use the mean squared error (MSE) to quantify both the training error and the validation error.

\begin{figure}[H]
\centering\includegraphics[width=1\linewidth]{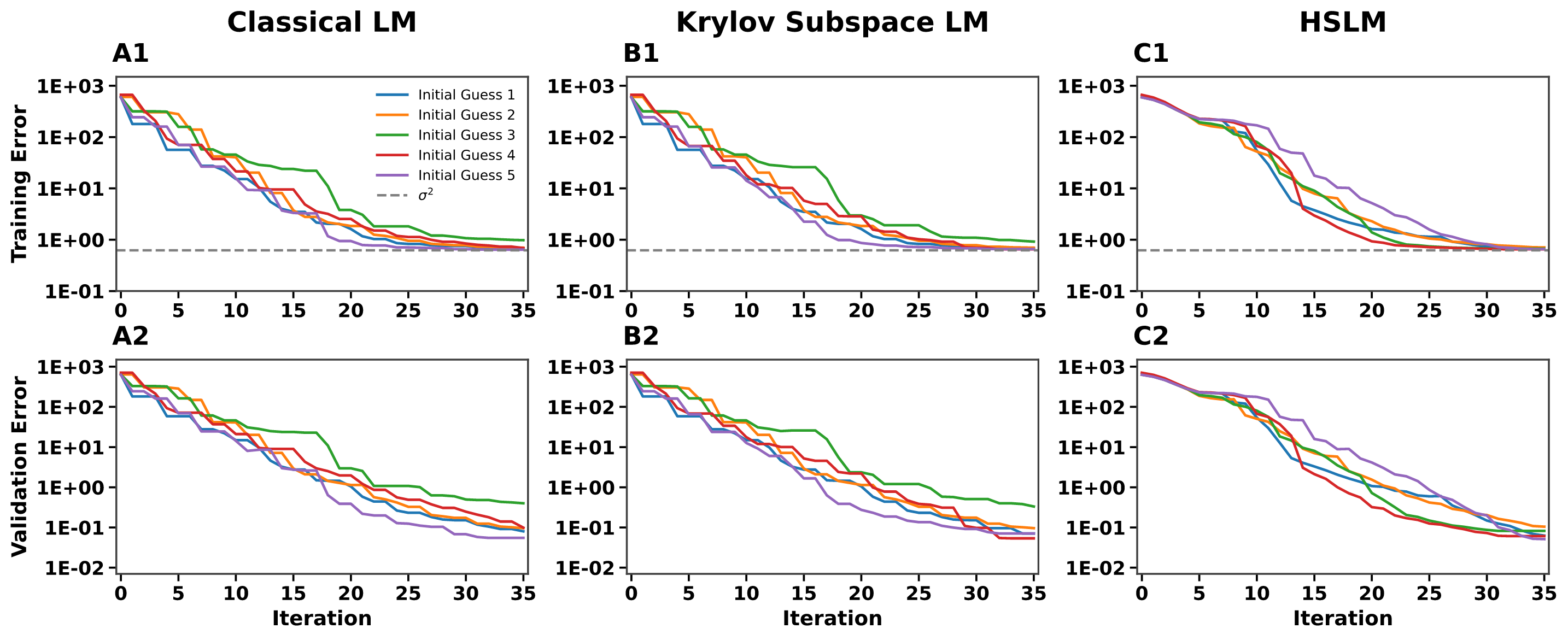}
\caption{Training results for five representative runs of network 3 for nonlinear function approximation. A1-A2. Levenberg–Marquardt algorithm. B1-B2. Krylov subspace Levenberg–Marquardt algorithm. C1-C2. Hybrid subspace Levenberg–Marquardt algorithm. The gray dashed line indicates the variance of the added noise, $\sigma^2 \approx 0.622$. These results indicate that HSLM achieves convergence behavior comparable to the classical LM and Krylov-subspace LM algorithms.}
\label{fig:subim1}
\end{figure} 
\begin{table}[H]
\centering
\caption{Comparison in Computational Performance of Neural Network Training Algorithms}
\label{tab:nn_training_comparison1}
\renewcommand{\arraystretch}{1.15}
\begin{tabular}{lcccccc}
\toprule
& \multicolumn{2}{c}{Classical LM} & \multicolumn{2}{c}{Krylov Subspace LM}& \multicolumn{2}{c}{HSLM} \\
\cmidrule(lr){2-3}\cmidrule(lr){4-5}\cmidrule(lr){6-7}

& Iteration & Time / Iteration (s)
&  Iteration & Time / Iteration (s) & Iteration & Time / Iteration (s)\\
\midrule
Network 1   & 41.7 & 0.492 &  41.1 & 0.262 &  52.7 & 0.131\\
Network 2   & 41.6  & 2.367  & 39.4  & 1.738  &  36.9 & 0.615\\
Network 3   & 34.9   & 15.830    & 36.5   & 13.082   &  35.4 & 3.665\\
\bottomrule
\end{tabular}
\end{table}

All three algorithms achieve similar final training errors, with the training error approaching the variance of the added noise, $\sigma^2$, indicating that the networks capture the underlying Friedman function to approximately the noise level (see Appendix B for network 1 \& 2). Despite this comparable convergence behavior, their computational costs differ substantially. HSLM reduces the average execution time per iteration relative to classical LM by factors of approximately 3.7, 3.8, and 4.3 for Networks 1, 2, and 3, respectively. Relative to Krylov-subspace LM, the corresponding speedups are approximately 2.0, 2.8, and 3.6. The increasing advantage with network size suggests that the reduced adaptive formulation becomes increasingly beneficial as the cost of the full LM system grows.

HSLM achieves this computational efficiency using relatively small adaptive subspaces. The deterministic adequacy monitor adjusts the subspace dimension according to the information required at each iteration. As shown in Appendix C, HSLM generally operates with a substantially smaller subspace than Krylov-subspace LM and still satisfies the prescribed projected-gradient adequacy criterion. The convergence behavior also depends on the composition of the subspace. Appendix D shows that randomized curvature probes alone, or randomized probes combined only with the gradient, converge more slowly than the full hybrid construction. This comparison supports the use of complementary gradient, memory, Krylov/Lanczos, and randomized curvature information in HSLM.

\section{Discussion}

We have developed an adaptive hybrid subspace Levenberg–Marquardt method for large-scale nonlinear least-squares problems. The framework provides convergence guarantees for a general hybrid subspace construction, so the choice of basis directions can be tailored to the structure of the problem. The adequacy monitor provides a systematic mechanism for integrating complementary basis directions while adaptively controlling the subspace dimension. The numerical experiments illustrate both aspects of this construction: Appendix C shows how the subspace dimension adapts during optimization, while Appendix D demonstrates the benefit of combining multiple sources of descent and curvature information. Because the convergence framework does not prescribe a specific candidate pool, the subspace construction can incorporate problem-dependent directions, providing flexibility for other classes of large-scale nonlinear least-squares problems \cite{Martens2010,Vinyals2012,Lin2016,Bjarkason2018}.

In addition, separating step acceptance from damping adjustment allows HSLM to retain adaptive regularization while decoupling the two mechanisms \citep{Armijo1966,NocedalWright2006}. Armijo backtracking determines whether and how far to move along the computed direction, whereas the gain ratio is used only to adjust the damping parameter based on the quality of the local quadratic approximation \citep{ConnGouldToint2000,MadsenNielsenTingleff2004,MoreSorensen1983}. This separation avoids the cost of repeated damped-system solves during step acceptance without eliminating the stabilizing role of LM regularization. Moreover, the predicted reduction remains positive for accepted Armijo step lengths and can be evaluated directly in the reduced coordinates (Appendix F), ensuring that the gain ratio is well defined.

The adequacy monitor ensures that the reduced space captures a prescribed fraction of the gradient, but it does not directly measure whether the subspace captures all curvature directions relevant to an effective LM step. Performance therefore still depends on the quality and diversity of the candidate directions used to construct and enrich the subspace. The hybrid construction used here addresses this issue by combining gradient, memory, Krylov/Lanczos, and randomized curvature information, but other problems may benefit from different or problem-specific candidate directions.

The scaling experiments in Appendix E help clarify a separate limitation related to computational cost. At a fixed training-set size, HSLM becomes increasingly advantageous as the parameter dimension grows, indicating that the reduced-space construction is particularly effective in reducing parameter-dependent linear-algebra costs. However, these experiments do not examine the effect of increasing the number of training samples. As the training-set size grows, the cost of Jacobian evaluations and matrix-vector products is expected to become increasingly important and is not reduced directly by restricting the optimization step to a low-dimensional parameter subspace.  In addition, the present numerical study is limited to neural-network regression problems. Future work should therefore examine mini-batch and matrix-free HSLM variants for larger training sets, as well as applications to large-scale inverse problems with different Jacobian and curvature structures.

Overall, the results demonstrate that adaptive hybrid subspace construction can preserve convergence behavior comparable to full-space LM while substantially reducing the cost of linear-algebra computations as the parameter dimension increases. Together with the convergence analysis, these results suggest that HSLM provides a promising approach for retaining the favorable properties of LM-type methods while improving computational efficiency for large-scale problems.

\newpage\section*{Appendix}
\appendix
\section{Implementation Details for the Subspace 
Levenberg--Marquardt Methods}
\label{app:subspace_lm}

We implemented two subspace variants of the Levenberg--Marquardt method: a Krylov-subspace LM method and HSLM method. For the updating damping strategy, we use delayed gratification for all algorithms by choosing $\rho_{low} = \rho_{high} = 0$ and $\alpha = 10^{-3}$. This appendix summarizes the main subspace construction rules, numerical tolerances, and expansion criteria for each method.

\subsection{Krylov-Subspace Levenberg--Marquardt Method}
\label{app:krylov_lm}

The Krylov basis is generated by the Lanczos process, initialized with the normalized gradient $q_1=g_k/\|g_k\|_2$, which spans the Krylov subspace
\[
\mathcal{K}_{p}(H_k,g_k)
=
\operatorname{span}
\{g_k,H_kg_k,\ldots,H_k^{p-1}g_k\}.
\]
To reduce computational and memory costs, we avoid explicitly forming the Gauss–Newton matrix, $H_kv=J_k^\top(J_kv)$, and instead compute the required matrix–vector products directly. The Lanczos iteration is terminated when either the Krylov subspace reaches the prescribed maximum dimension
$d_{\max}=\lfloor0.1n\rfloor$ or the Lanczos residual satisfies $\beta_j\le10^{-5}$.

Let $Q_k=[q_1,\ldots,q_{d_k}]$ denote the resulting orthonormal Krylov basis and
$T_k=Q_k^\top H_kQ_k$ the projected tridiagonal matrix. Since the first Lanczos vector is the normalized gradient,
\[
Q_k^\top g_k=\|g_k\|_2e_1,
\]
where $e_1=[1,0,\ldots,0]^\top\in\mathbb{R}^{d_k}$ is the first canonical basis vector. Consequently, the reduced Levenberg--Marquardt system is
\[
(T_k+\mu_k \mathbf{I})z_k=-\|g_k\|_2e_1,
\]
and the full-space step is recovered as $s_k=Q_kz_k.$ If a trial step is rejected, the previously computed Krylov basis is retained and only the reduced system is solved with the updated damping parameter. This avoids recomputing the Jacobian and repeating the Lanczos process, thereby reducing the computational cost of rejected iterations.

\subsection{Adaptive Hybrid Reduced-Subspace 
Levenberg--Marquardt Method}
\label{app:hybrid_lm}

At iteration~$k$, the hybrid method constructs a reduced orthonormal basis $V_k\in\mathbb{R}^{n\times p}$ by combining Lanczos--Krylov directions with stochastic curvature probes. The Lanczos process is initialized with the normalized gradient $q_1=g_k/\|g_k\|_2$, and uses implicit Gauss--Newton matrix--vector products, $H_kv=J_k^\top(J_kv)$. The initial candidate subspace consists of $\lfloor0.01n\rfloor$ random curvature probes and recent accepted step $s_{k-1}$. Each random probe is generated as
\[
d_j=J_k^\top J_k\omega_j,
\qquad
\omega_j\sim\mathcal{N}(0,I),
\]
normalized, and appended to the candidate matrix. The combined candidate matrix is then orthogonalized using a thin QR factorization, with linearly dependent directions removed. The quality of the reduced basis is measured by the fraction of the squared gradient norm captured by the subspace. If this quantity satisfies $\eta^{\mathrm{sub}}_k\ge0.99$, the current basis is accepted. Otherwise, the subspace is expanded by adding $\lfloor0.02n\rfloor$ Lanczos directions and $\lfloor0.01n\rfloor$ random curvature probes. 

Subspace expansion continues until either the target $\eta^{\mathrm{sub}}_k\ge0.99$ is achieved or the maximum dimension reaches $\lfloor0.1n\rfloor$. Since linearly dependent vectors are discarded during QR orthogonalization, the final numerical dimension of $V_k$ may be smaller than the total number of generated candidate directions.

The hybrid method also employs backtracking, starting with $t_k=1$ and updating $t_k\leftarrow \beta t_k$, with at most ten trial step lengths evaluated, which is an implementation safeguard outside the theorem. The algorithm is terminated when the reduction in the objective function satisfies $F(x_k)-F(x_{k+1})<0.005,$ or when the maximum number of $10000$ iterations is reached.

\subsection{Summary of Numerical Settings}

\begin{table}[htbp]
    \centering
    \caption{Numerical settings used for the Krylov and adaptive
    hybrid reduced-subspace LM methods.}
    \label{tab:subspace_lm_settings}
    \small
    \begin{tabular}{lcc}
        \toprule
        Setting
        & Krylov LM
        & Hybrid reduced-subspace LM \\
        \midrule

        Initial Krylov directions
        & Up to maximum
        & --- \\

        Initial random probes
        & ---
        & $0.01n$ \\

        Krylov directions per expansion
        & ---
        & $0.02n$ \\

        Random probes per expansion
        & ---
        & $0.01n$ \\

        Adequacy threshold
        & ---
        & $\eta_{\min}=0.99$ \\

        Lanczos tolerance
        & $10^{-5}$
        & $10^{-5}$ \\

        QR dependence tolerance
        & ---
        & $10^{-12}$ \\

        Reduced singular-value floor
        & ---
        & $10^{-8}$ \\

        Initial damping $\mu_0$
        & $10$
        & $10$ \\

        Damping decrease $\mu_{down}$
        & 2
        & 2 \\

        Damping increase
        & 5
        & 5 \\

        Step stopping tolerance
        & $0.005$
        & $0.005$ \\

        Maximum iterations
        & $10000$
        & $10000$ \\
        \bottomrule
    \end{tabular}
\end{table}

\newpage
\section{Training and validation results for network 1 \& 2}
Five representative training results obtained from Network 1 \& 2 using the same initial guess demonstrate comparable convergence behavior for the three algorithms. A1-A2. Levenberg–Marquardt algorithm. B1-B2. Krylov subspace Levenberg–Marquardt algorithm. C1-C2. Hybrid subspace Levenberg–Marquardt algorithm. The gray dashed line indicates the variance of the added noise, $\sigma^2 \approx 0.622$. 
\subsection{Network 1}
\begin{figure}[H]
\centering\includegraphics[width=1\linewidth]{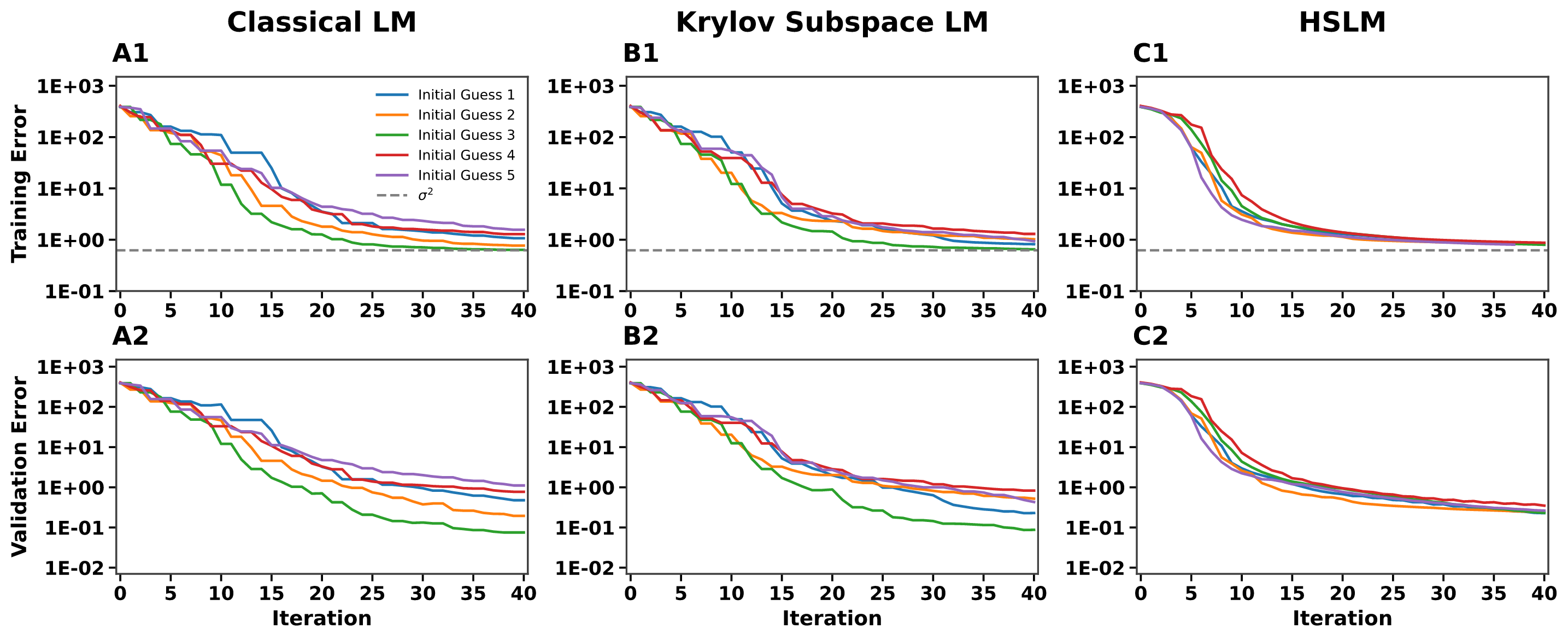}
\end{figure} 
\subsection{Network 2}
\begin{figure}[H]
\centering\includegraphics[width=1\linewidth]{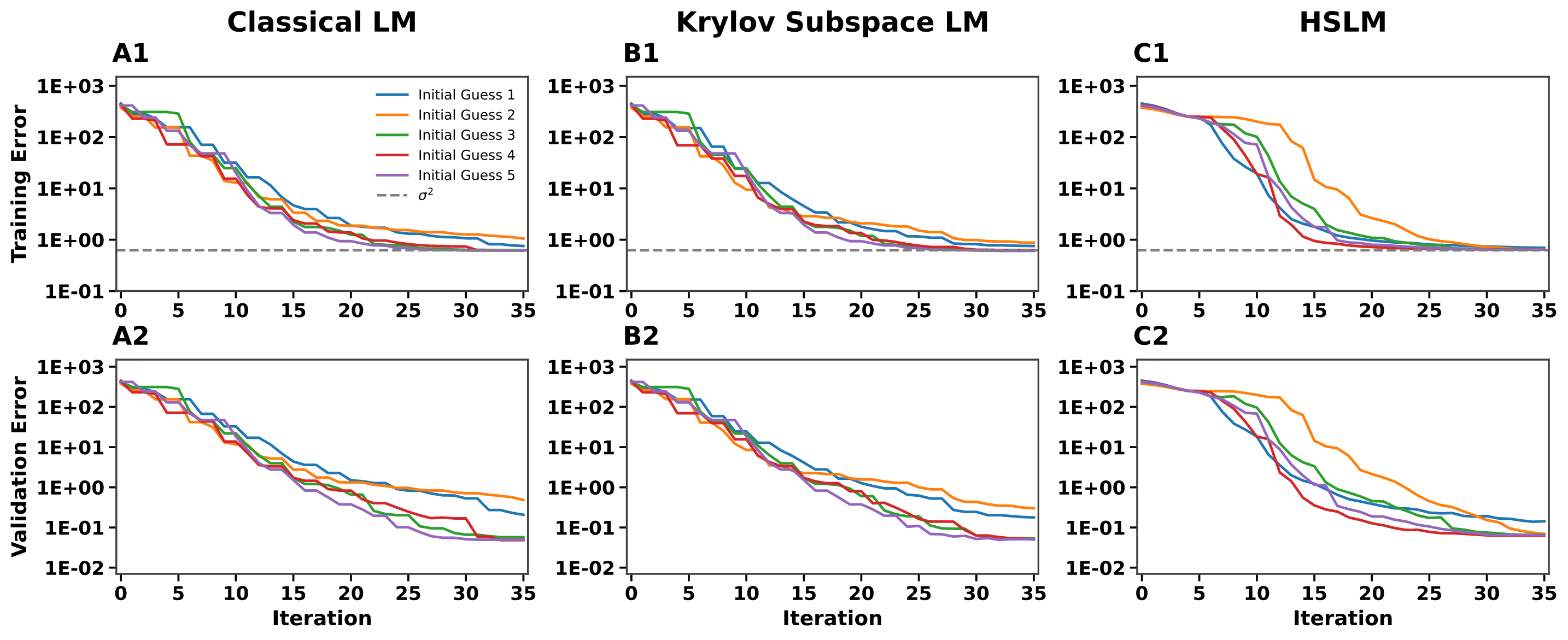}
\end{figure}

\newpage
\section{Adaptive subspace dimension selection using deterministic adequacy monitor}
\begin{figure}[H]
\centering\includegraphics[width=1\linewidth]{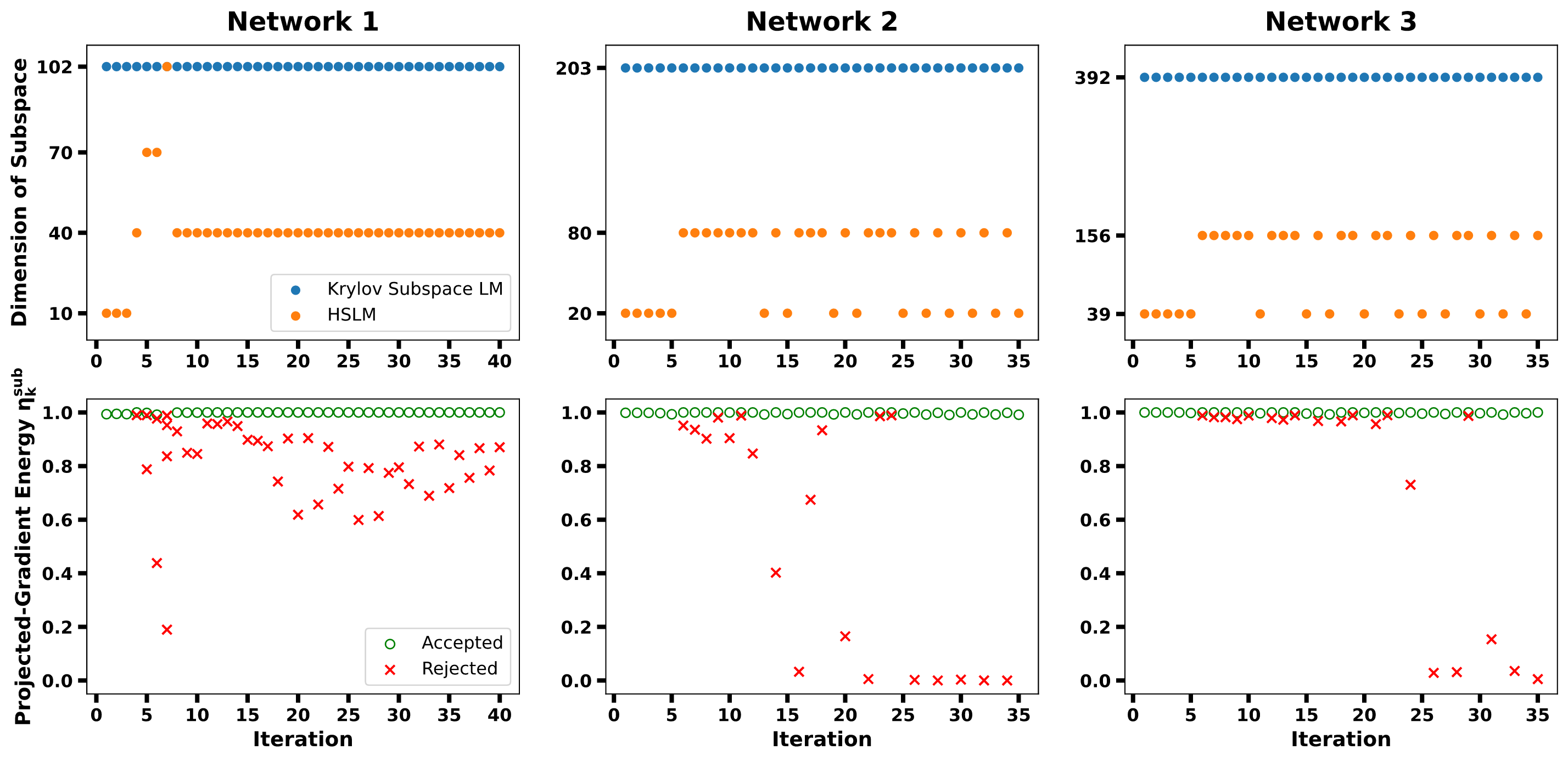}
\end{figure}
The figure compares the subspace dimensions selected by Krylov Subspace LM and HSLM across iterations for three neural networks of increasing complexity. In the upper row, each panel shows a representative optimization run using the same initial guess for the corresponding network. In the lower row, each panel displays the corresponding projected-gradient energy, $\bf{\eta^{\mathrm{sub}}_k}$, generated by HSLM from the same initial guess, illustrating how failure of the adequacy criterion triggers subspace enrichment during optimization. In contrast to Krylov-subspace LM, which uses a fixed subspace equal to 10\% of the parameter dimension, HSLM dynamically adjusts the subspace dimension and generally satisfies the adequacy criterion using a substantially smaller subspace.

\newpage
\section{Influence of Candidate Basis Sources in Subspace Construction on Algorithm Performance}
To examine how different sources of basis candidates in subspace construction influence the performance of the proposed method, we use the experimental setup described above for Network 3 to compare three difference subspace construction strategies: randomized Hessian probes only, randomized Hessian probes and gradient direction, and HSLM.

\begin{figure}[H]
\centering\includegraphics[width=1\linewidth]{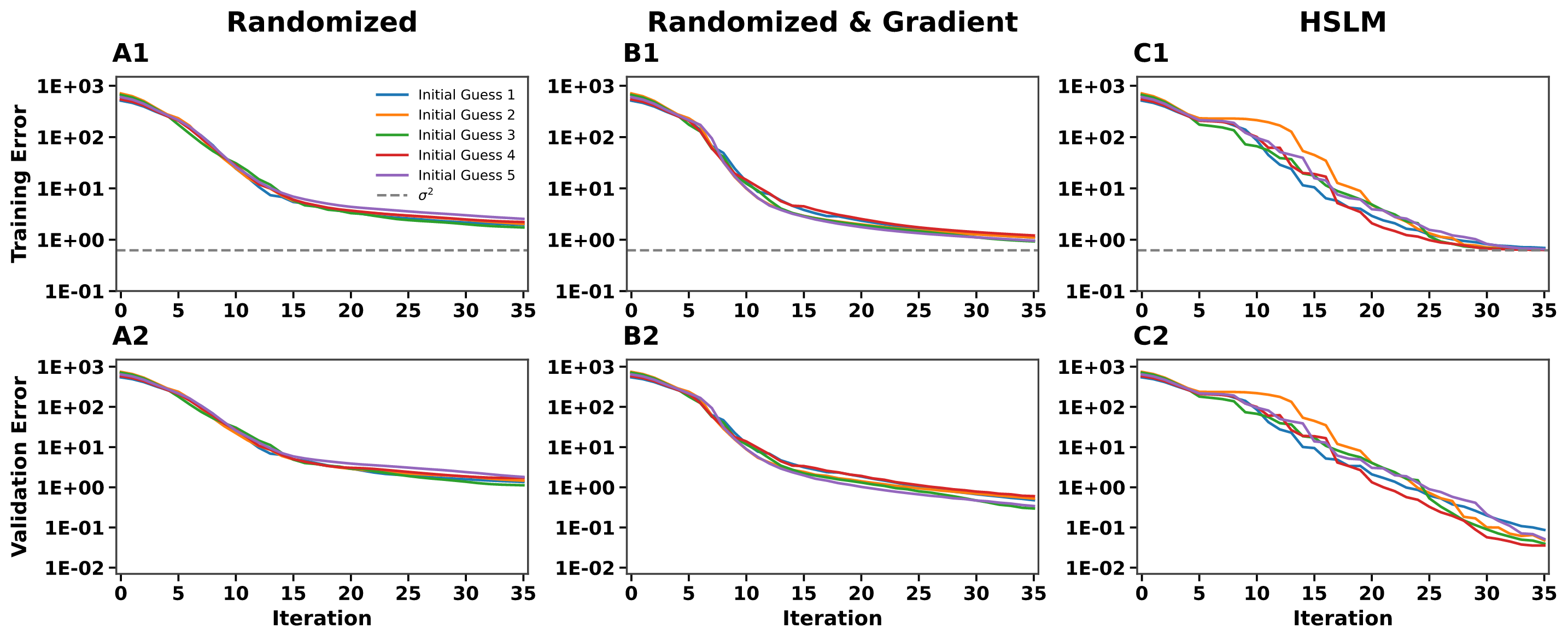}
\end{figure} 
Five representative training results obtained from Network 3 using the same initial guess demonstrate comparable convergence behavior for the three algorithms. A1-A2. Randomized Hessian/Gauss--Newton matrix--vector only B1-B2. Randomized Hessian/Gauss--Newton matrix--vector and gradient direction. C1-C2. HSLM algorithm. The gray dashed line indicates the variance of the added noise, $\sigma^2 \approx 0.622$. 
The figure compares training and validation errors for Network 3 using five representative initial guesses.  The purely randomized Hessian method, whose subspace is constructed only from randomized Hessian probes, initially reduces error but struggles to reach the added noise level, indicating that the sampled curvature directions alone may omit important descent information. Adding the gradient direction produces more reliable convergence behavior toward a local optimal solution, but the reduction in both training and validation error remains slow. Meanwhile, HSLM achieves the strongest overall performance, reaching the lowest training and validation errors more rapidly and consistently across the five representative runs. These results demonstrate that augmenting randomized curvature information with additional structured directions improves the convergence behavior, with the full HSLM construction providing the strongest performance among the three variants considered.

\newpage
\section{Computational Efficiency of HSLM in Increasing Parameter Dimension at Fixed Training-Set Size}
We evaluate the efficiency of the proposed HSLM algorithm by comparing it with the hybrid Krylov-subspace and classical Levenberg--Marquardt algorithms on overparameterized feedforward neural networks. The experimental setup is identical to that described previously, using the same 40,000 training samples. To simulate measurement uncertainty, additive Gaussian noise with a standard deviation equal to 5\% of the standard deviation of the target signal is added to the training data. The average number of iterations and the average execution time per iteration over 50 independent trials are summarized in Table~\ref{tab:nn_training_comparison2}.
\begin{table}[H]
\centering
\caption{Comparison between algorithms for neural-network training}
\label{tab:nn_training_comparison2}
\renewcommand{\arraystretch}{1.15}
\begin{tabular}{lcccccc}
\toprule
& \multicolumn{2}{c}{Classical LM} & \multicolumn{2}{c}{Krylov Subspace LM}& \multicolumn{2}{c}{HSLM} \\
\cmidrule(lr){2-3}\cmidrule(lr){4-5}\cmidrule(lr){6-7}

& Iteration & Time / Iteration (s)
&  Iteration & Time / Iteration (s) & Iteration & Time / Iteration (s)\\
\midrule
Network 1   & 43.9 & 1.044 &  45.3 & 0.941 &  49.8 & 0.464\\
Network 2   & 38.2  & 3.235  & 37.1  & 3.790  &  37.1 & 1.221\\
Network 3   & 34.9   & 15.830    & 36.5   & 13.082   &  35.4 & 3.665\\
\bottomrule
\end{tabular}
\end{table}
For all three network architectures, the final training error is approximately equal to the variance of the additive noise, indicating that each algorithm successfully captures the underlying Friedman function and that the remaining error is primarily due to irreducible noise. While all three methods exhibit comparable convergence behavior, the proposed HSLM algorithm scales more effectively as the network size increases. Compared with the classical LM algorithm, HSLM reduces the execution time by factors of approximately 2.3, 2.6, and 4.3 for Networks~1, 2, and~3, respectively. Relative to the Krylov-subspace LM algorithm, HSLM achieves additional speedups of approximately 2.0, 3.1, and 3.6 for Networks~1, 2, and~3, respectively, demonstrating its enhanced computational performance for large overparameterized networks.

\newpage
\section{Closed-form predicted reduction}
\begin{lemma}\label{lem:pred}
Let $m_k$ be defined by \eqref{eq:model} and $s_k$ by \eqref{eq:reducedLM}. Then for any $t\in(0,2)$,
\begin{equation}\label{eq:pred}
m_k(0)-m_k(t s_k)=\dfrac{1}{2}\, t(2-t)\, y_k^\top B_k y_k >0 \qquad \text{ for all } t\in(0,2)
\end{equation}

\end{lemma}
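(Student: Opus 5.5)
Since $m_k$ is a quadratic in $s$, the plan is to expand it along the ray $t s_k$ in reduced coordinates, read off a linear and a quadratic term in $t$, and then use the reduced system \eqref{eq:reducedLM} to identify both terms with multiples of $y_k^\top B_k y_k$.

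\textbf{Step 1: rewrite the model.} Since $s_k=V_kZ_ky_k$, $V_k$ has orthonormal columns and $Z_k$ is orthogonal, we have $Z_k^\top V_k^\top (t s_k)=t y_k$. Hence the damping term of \eqref{eq:model} equals $\tfrac{\mu_k}{2}t^2\, y_k^\top D_{k,p} y_k$.

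\textbf{Step 2: expand the linearized term.} We write
\[
\tfrac12\|r_k+tJ_ks_k\|^2=\tfrac12\|r_k\|^2+t\,r_k^\top J_ks_k+\tfrac{t^2}{2}\|J_ks_k\|^2 .
\]
Now $J_ks_k=J_{k,p}Z_ky_k=U_k\Sigma_{k,p}y_k$ by \eqref{eq:svd}. This gives $\|J_ks_k\|^2=y_k^\top\Sigma_{k,p}^2y_k$ and $r_k^\top J_ks_k=(\Sigma_{k,p}U_k^\top r_k)^\top y_k$.

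\textbf{Step 3: combine and use the reduced system.} Putting Steps 1 and 2 together,
\[
m_k(0)-m_k(ts_k)=-t\,(\Sigma_{k,p}U_k^\top r_k)^\top y_k-\tfrac{t^2}{2}\,y_k^\top B_k y_k .
\]
By \eqref{eq:reducedLM}, $\Sigma_{k,p}U_k^\top r_k=-B_ky_k$, so the linear term is $t\,y_k^\top B_ky_k$. The difference is therefore $(t-\tfrac{t^2}{2})\,y_k^\top B_ky_k=\tfrac12 t(2-t)\,y_k^\top B_ky_k$, which is \eqref{eq:pred}.

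\textbf{Step 4: positivity.} By Lemma~\ref{lem:spd}, $B_k\succ0$, and $t(2-t)>0$ for $t\in(0,2)$. Strict positivity then requires $y_k\neq0$. Since $y_k=-B_k^{-1}\Sigma_{k,p}U_k^\top r_k$ and, by \eqref{eq:gred}, $\|\Sigma_{k,p}U_k^\top r_k\|=\|V_k^\top g_k\|$, we get $y_k\neq0$ exactly when $V_k^\top g_k\neq0$. That holds whenever $g_k\neq0$, because Assumption~\ref{ass:A3} gives $\|V_k^\top g_k\|^2\ge\eta_{\min}\|g_k\|^2$.

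The only real subtlety is this positivity step: it needs the implicit hypothesis $g_k\neq0$ (a nonterminated iteration), and I would state it explicitly. Everything else is routine algebra, resting on the orthogonality identities $\|V_kZ_ky\|=\|y\|$ and $J_kV_kZ_k=U_k\Sigma_{k,p}$.
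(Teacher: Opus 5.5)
Your proof is correct and follows the paper's argument essentially step for step. You rewrite the model in reduced coordinates using the orthogonality of $V_k$, $Z_k$ and $U_k$, expand the quadratic in $t$, and substitute $\Sigma_{k,p}U_k^\top r_k=-B_ky_k$. Your explicit check that $y_k\neq0$, which needs $g_k\neq0$ together with Assumption~\ref{ass:A3}, makes precise what the paper only signals with ``under Assumption 3''.
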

\begin{proof}
Using orthogonality of $V_k$ and $Z_k$, the local quadratic approximation \eqref{eq:model} restricted to this subspace becomes
\[
\begin{aligned}
m_k(t s_k)
&= \dfrac{1}{2} \|r_k + t J_k s_k\|^2 + \frac{\mu_k}{2} t^2 \|D_{k,p}^{1/2}Z_k^\top V_k^\top s_k\|^2 \\[4pt]
&= \dfrac{1}{2}\|r_k + t J_k(V_kZ_ky_k)\|^2 + \frac{\mu_k}{2} t^2 \|D_{k,p}^{1/2} Z_k^\top V_k^\top (V_kZ_ky_k)\|^2  \\[4pt]
&= \dfrac{1}{2}\|r_k + t (J_kV_k)Z_ky_k\|^2 + \frac{\mu_k}{2} t^2 \|D_{k,p}^{1/2} y_k\|^2  \\[4pt]
&= \dfrac{1}{2}\|r_k + t J_{k,p} Z_k y_k\|^2 + \frac{\mu_k}{2} t^2 y_k^\top D_{k,p} y_k .
\end{aligned}
\]
Since $J_{k,p}Z_k=U_k\Sigma_{k,p}$, define $q_k:=U_k^\top r_k$. Then
\[
\|r_k + t U_k\Sigma_{k,p} y_k\|^2
= \|r_k\|^2 + 2t\, y_k^\top \Sigma_{k,p} q_k + t^2 y_k^\top \Sigma_{k,p}^2 y_k,
\]
using orthogonality of $U_k$. Hence
\[
m_k(t s_k)
= \dfrac{1}{2} \|r_k\|^2 + t\, y_k^\top \Sigma_{k,p} q_k + \dfrac{1}{2} t^2 y_k^\top(\Sigma_{k,p}^2+\mu_k D_{k,p})y_k.
\]
Using the reduced LM system \eqref{eq:reducedLM}, $\Sigma_{k,p} q_k = -B_k y_k$, then
\[
m_k(t s_k)
= \dfrac{1}{2} \|r_k\|^2 - t\, y_k^\top B_k y_k + \dfrac{1}{2} t^2 y_k^\top B_k y_k
= m_k(0) + \dfrac{1}{2} t(t-2) y_k^\top B_k y_k,
\]
which rearranges to \eqref{eq:pred}. Then,

\[
m_k(0) - m_k(t s_k) 
= \dfrac{1}{2} t(2-t) y_k^\top B_k y_k > 0 \quad \text{ under Assumption 3}
\]
since $y_k^\top B_k y_k>0$ (Lemma~\ref{lem:spd}) and $t\in(0,2)$.
\end{proof}

\newpage
\bibliography{references}

@incollection{VogelWade1993,
  author    = {Vogel, C. R. and Wade, J. G.},
  title     = {A Modified Levenberg--Marquardt Algorithm for Large-Scale Inverse Problems},
  booktitle = {Computation and Control III},
  editor    = {Bowers, Kenneth and Lund, John},
  series    = {Progress in Systems and Control Theory},
  volume    = {15},
  publisher = {Birkh{\"a}user},
  address   = {Boston, MA},
  pages     = {367--378},
  year      = {1993},
  doi       = {10.1007/978-1-4612-0321-6_27}
}

@article{halko2011finding,
  title   = {Finding Structure with Randomness:
             Probabilistic Algorithms for Constructing
             Approximate Matrix Decompositions},
  author  = {Halko, Nathan and Martinsson, Per-Gunnar
             and Tropp, Joel A.},
  journal = {SIAM Review},
  volume  = {53},
  number  = {2},
  pages   = {217--288},
  year    = {2011},
  doi     = {10.1137/090771806}
}

@inproceedings{huang2020span,
  author    = {Huang, Xunpeng and Liang, Xianfeng and Liu, Zhengyang
               and Li, Lei and Yu, Yue and Li, Yitan},
  title     = {{SPAN}: A Stochastic Projected Approximate Newton Method},
  booktitle = {Proceedings of the AAAI Conference on Artificial Intelligence},
  pages     = {1520--1527},
  year      = {2020},
  doi       = {10.1609/aaai.v34i02.5511}
}

@article{HaganMenhaj1994,
  author  = {Hagan, Martin T. and Menhaj, Mohammad B.},
  title   = {Training Feedforward Networks with the Marquardt Algorithm},
  journal = {IEEE Trans. Neural Netw.},
  volume  = {5},
  number  = {6},
  pages   = {989--993},
  year    = {1994},
  doi     = {10.1109/72.329697}
}

@article{Bjarkason2018,
  author  = {Bjarkason, Elvar K. and Maclaren, Oliver J. and O'Sullivan, John P. and O'Sullivan, Michael J.},
  title   = {Randomized Truncated SVD Levenberg--Marquardt Approach to Geothermal Natural State and History Matching},
  journal = {Water Resources Research},
  volume  = {54},
  number  = {3},
  pages   = {2376--2404},
  year    = {2018},
  doi     = {10.1002/2017WR021870}
}

@article{Lin2016,
  author  = {Lin, Youzuo and O'Malley, Daniel and Vesselinov, Velimir V.},
  title   = {A Computationally Efficient Parallel {Levenberg--Marquardt} Algorithm for Highly Parameterized Inverse Model Analyses},
  journal = {Water Resour. Res.},
  volume  = {52},
  number  = {9},
  pages   = {6948--6977},
  year    = {2016},
  doi     = {10.1002/2016WR019028}
}

@article{Cartis2022,
  author  = {Cartis, Coralia and Fowkes, Jaroslav M. and Shao, Zhen},
  title   = {A Randomised Subspace Gauss-Newton Method for Nonlinear Least-Squares},
  journal = {CoRR},
  volume  = {abs/2211.05727},
  year    = {2022},
  doi     = {10.48550/arXiv.2211.05727},
  eprint  = {2211.05727},
  archivePrefix = {arXiv}
}

@article{Marquardt1963,
  author  = {Marquardt, Donald W.},
  title   = {An Algorithm for Least-Squares Estimation of Nonlinear Parameters},
  journal = {J. Soc. Indust. Appl. Math.},
  volume  = {11},
  number  = {2},
  pages   = {431--441},
  year    = {1963},
  doi     = {10.1137/0111030}
}

@phdthesis{Shao2021,
  author  = {Shao, Zhen},
  title   = {On Random Embeddings and Their Application to Optimisation},
  school  = {University of Oxford},
  address = {Oxford, UK},
  year    = {2021},
  doi     = {10.5287/ora-dmzo7zv6j}
}

@article{Levenberg1944,
  author  = {Levenberg, Kenneth},
  title   = {A Method for the Solution of Certain Non-Linear Problems in Least Squares},
  journal = {Quarterly of Applied Mathematics},
  volume  = {2},
  number  = {2},
  pages   = {164--168},
  year    = {1944}
}

@article{Armijo1966,
  author  = {Armijo, Larry},
  title   = {Minimization of Functions Having Lipschitz Continuous First Partial Derivatives},
  journal = {Pacific Journal of Mathematics},
  volume  = {16},
  number  = {1},
  pages   = {1--3},
  year    = {1966}
}

@book{NocedalWright2006,
   author    = {Nocedal, Jorge and Wright, Stephen J.},
  title     = {Numerical Optimization},
  edition   = {2nd},
  series    = {Springer Ser. Oper. Res. Financ. Eng.},
  publisher = {Springer},
  address   = {New York},
  year      = {2006},
  doi       = {10.1007/978-0-387-40065-5}
}

@book{ConnGouldToint2000,
  author    = {Conn, Andrew R. and Gould, Nicholas I. M. and Toint, Philippe L.},
  title     = {Trust Region Methods},
  series    = {MOS-SIAM Ser. Optim.},
  publisher = {SIAM},
  address   = {Philadelphia},
  year      = {2000},
  doi       = {10.1137/1.9780898719857}
}

@article{MoreSorensen1983,
  author  = {Mor{\'e}, Jorge J. and Sorensen, Danny C.},
  title   = {Computing a Trust Region Step},
  journal = {SIAM Journal on Scientific and Statistical Computing},
  volume  = {4},
  number  = {3},
  pages   = {553--572},
  year    = {1983}
}

@book{DennisSchnabel1996,
  author    = {Dennis, Jr., J. E. and Schnabel, Robert B.},
  title     = {Numerical Methods for Unconstrained Optimization and Nonlinear Equations},
  series    = {Classics in Applied Mathematics},
  volume    = {16},
  publisher = {SIAM},
  address   = {Philadelphia},
  year      = {1996},
  doi       = {10.1137/1.9781611971200}
}

@book{MadsenNielsenTingleff2004,
  author    = {Madsen, Kaj and Nielsen, Hans Bruun and Tingleff, Ole},
  title     = {Methods for Non-Linear Least Squares Problems},
  edition   = {2nd},
  publisher = {Informatics and Mathematical Modelling,
               Technical University of Denmark},
  address   = {Lyngby, Denmark},
  year      = {2004}
}

@inproceedings{Brand2002incremental,
  author    = {Brand, Matthew},
  title     = {Incremental Singular Value Decomposition of Uncertain Data with Missing Values},
  booktitle = {European Conference on Computer Vision (ECCV)},
  series    = {Lecture Notes in Computer Science},
  volume    = {2350},
  pages     = {707--720},
  year      = {2002},
  doi       = {10.1007/3-540-47969-4_47}
}

@article{yu2018levenberg,
  author  = {Wilamowski, Bogdan M. and Yu, Hao},
  title   = {Improved Computation for Levenberg--Marquardt Training},
  journal = {IEEE Transactions on Neural Networks},
  volume  = {21},
  number  = {6},
  pages   = {930--937},
  year    = {2010},
  doi     = {10.1109/TNN.2010.2045657}
}

@article{Ampazis,
  author  = {Ampazis, N. and Perantonis, S. J.},
  title   = {Two Highly Efficient Second-Order Algorithms for Training Feedforward Networks},
  journal = {IEEE Trans. Neural Netw.},
  volume  = {13},
  number  = {5},
  pages   = {1064--1074},
  year    = {2002},
  doi     = {10.1109/TNN.2002.1031939}
}

@article{KimLee,
  author={Kim, Cheol-Taek and Lee, Ju-Jang},
  journal={IEEE Transactions on Neural Networks}, 
  title={Training Two-Layered Feedforward Networks With Variable Projection Method}, 
  year={2008},
  volume={19},
  number={2},
  pages={371-375},
  doi={10.1109/TNN.2007.911739}
  }

@article{cybenko1989approximation,
  title={Approximation by superpositions of a sigmoidal function},
  author={Cybenko, George},
  journal={Mathematics of Control, Signals and Systems},
  volume={2},
  number={4},
  pages={303--314},
  year={1989},
  publisher={Springer}
}

@article{hornik1989multilayer,
  title={Multilayer feedforward networks are universal approximators},
  author={Hornik, Kurt and Stinchcombe, Maxwell and White, Halbert},
  journal={Neural Networks},
  volume={2},
  number={5},
  pages={359--366},
  year={1989},
  publisher={Elsevier}
}

@article{barron1993universal,
  title={Universal approximation bounds for superpositions of a sigmoidal function},
  author={Barron, Andrew R.},
  journal={IEEE Transactions on Information Theory},
  volume={39},
  number={3},
  pages={930--945},
  year={1993},
  publisher={IEEE}
}

@article{friedman1991mars,
  author  = {Friedman, Jerome H. and Grosse, Eric and Stuetzle, Werner},
  title   = {Multidimensional Additive Spline Approximation},
  journal = {SIAM J. Sci. Statist. Comput.},
  volume  = {4},
  number  = {2},
  pages   = {291--301},
  year    = {1983},
  doi     = {10.1137/0904023}
}

@inproceedings{Vinyals2012,
  title     = {Krylov Subspace Descent for Deep Learning},
  author    = {Vinyals, Oriol and Povey, Daniel},
  booktitle = {Proceedings of the Fifteenth International Conference on Artificial Intelligence and Statistics},
  series    = {Proceedings of Machine Learning Research},
  volume    = {22},
  pages     = {1261--1268},
  year      = {2012},
  publisher = {PMLR}
}

@inproceedings{Martens2010,
  author    = {Martens, James},
  title     = {Deep Learning via Hessian-Free Optimization},
  booktitle = {Proceedings of the 27th International Conference on Machine Learning},
  series    = {ICML'10},
  pages     = {735--742},
  year      = {2010},
  publisher = {Omnipress}
}

@article{yuan2009subspace,
  author  = {Yuan, Ya-xiang},
  title   = {Subspace methods for large scale nonlinear equations and nonlinear least squares},
  journal = {Optimization and Engineering},
  volume  = {10},
  number  = {2},
  pages   = {207--218},
  year    = {2009},
  doi     = {10.1007/s11081-008-9064-0}
}

@book{constantine2015active,
  author    = {Constantine, Paul G.},
  title     = {Active Subspaces: Emerging Ideas for Dimension Reduction in Parameter Studies},
  series    = {SIAM Spotlights},
  publisher = {SIAM},
  address   = {Philadelphia},
  year      = {2015},
  doi       = {10.1137/1.9781611973860}
}

@article{constantine2016accelerating,
  author  = {Constantine, Paul G. and Kent, Carson and Bui-Thanh, Tan},
  title   = {Accelerating Markov Chain Monte Carlo with Active Subspaces},
  journal = {SIAM Journal on Scientific Computing},
  volume  = {38},
  number  = {5},
  pages   = {A2779--A2805},
  year    = {2016},
  doi     = {10.1137/15M1042127}
}

\end{document}